\Crefname{ALC@unique}{Line}{Lines}
\setlist[enumerate]{leftmargin=.5in}
\setlist[itemize]{leftmargin=.5in}
\newcommand{\RR}{\mathbb R}
\newcommand{\FE}{finite element}
\newcommand{\iid}{i.i.d.}
\DeclareMathOperator{\ML}{ML}
\DeclareMathOperator{\MC}{MC}
\DeclareMathOperator{\LF}{LF}
\DeclareMathOperator{\LTSLF}{LF-LTS}
\DeclareMathOperator{\fine}{fine}
\DeclareMathOperator{\coar}{coarse}
\def\bysame{\leavevmode\hbox to3em{\hrulefill}\thinspace}
\title{Uncertainty Quantification by MLMC and Local Time-stepping \\ For Wave Propagation
}
\author{
Marcus J. Grote\thanks{Department of Mathematics and Computer Science,
University of Basel, Spiegelgasse 1, 4051 Basel, Switzerland
(\email{marcus.grote@unibas.ch}, \email{simon.michel@unibas.ch})}
\and Simon Michel\footnotemark[2]
\and Fabio Nobile\thanks{Calcul Scientifique et Quantification de l'Incertitude (CSQI), Institute of Mathematics, \'{E}cole Polytechnique F\'{e}d\'{e}rale de Lausanne, 1015 Lausanne, Switzerland (\email{fabio.nobile@epfl.ch})}
}
\soulregister\cite{7}
\soulregister\ref{7}
\soulregister\eqref{7}
\begin{document}
\maketitle

\begin{abstract}
Because of their robustness, efficiency and non-intrusiveness,
Monte Carlo methods are probably the most popular approach in uncertainty quantification to computing expected values of quantities of interest (QoIs). Multilevel Monte Carlo (MLMC) methods significantly reduce the computational cost by
distributing the sampling across a hierarchy of discretizations and allocating most samples to the coarser grids. 
For time dependent problems, spatial coarsening typically entails an increased time-step. 
Geometric constraints, however, may impede uniform coarsening thereby forcing 
some elements to remain small across all levels. 
If explicit time-stepping is used, the time-step will then be dictated by the smallest element on each level for numerical stability.
Hence, the increasingly stringent CFL condition on the time-step on coarser levels significantly reduces the advantages of the multilevel approach.
To overcome that bottleneck we propose to combine the multilevel approach of MLMC with local time-stepping (LTS). By adapting the time-step to the locally refined elements on each level,  
the efficiency of MLMC methods is restored even in the presence of complex geometry without sacrificing the explicitness and inherent parallelism.
In a careful cost comparison, we quantify the reduction in computational cost for local refinement either inside a small fixed region or towards a reentrant corner.
\end{abstract}

\begin{keywords}
  Uncertainty quantification,
  Multilevel Monte Carlo, 
  wave propagation,
  finite element methods,
  local time-stepping,
  explicit time integration.
\end{keywords}

\begin{AMS}
  65C05, 65L06, 65M20, 65M60, 65M75.
\end{AMS}

\section{Introduction}

Mathematical models based on partial differential equations
  (PDE) are widely used to describe complex phenomena and to make
  predictions in real-world applications. All such mathematical models,
  however, are affected by a certain degree of uncertainty that may
  arise because of imperfect characterization or intrinsic variability
  of model parameters, constitutive laws, forcing terms, initial
  states, etc.  
Uncertainty is typically included
  in PDE based models by replacing input parameters by stochastic
  variables or processes. 
  That uncertainty is then propagated across
space (and possibly time) by the solution $u$ of the corresponding
stochastic PDE and thereby determines the uncertainty in any observed
quantity of interest (QoI) $Q[u]$. 

In mathematical models from acoustics, electromagnetics or
elasticity, waves, as
ubiquitous information carriers, will also propagate uncertainty about input parameters
over long distances with little regularizing or smoothing effects.
The inherent lack of regularity hampers  the use of computational uncertainty quantification methods, such as polynomial chaos expansions or sparse quadratures, which rely on the smoothness of the so-called \emph{parameter-to-solution} map and/or  on the low dimensionality of the input space \cite{TryoenJCP2010,motamed.nobile.tempone:stochastic, motamed.nobile.tempone:analysis}. 

In contrast,
Monte Carlo (MC) methods \cite{Fishman}, probably the most popular alternative to quantifying the uncertainty in any QoI, are robust to the dimension of the input parameters and the lack of regularity of the parameter-to-QoI map. 
By drawing independent realizations from the input probability distribution on a sample space $\Omega$, they compute for each sample $\omega\in\Omega$ a solution 
$u(\omega,.)$ of the forward problem and thus permit to estimate the statistics of the QoI $Q[u(\omega,.)]$. Although
MC methods are easy to implement, their convergence in the number of samples is rather slow.

Multilevel Monte Carlo (MLMC) methods, first introduced for applications in parametric integration by Heinrich  \cite{Heinrich1998,Heinrich1999} and later extended by Giles in his seminal paper \cite{Giles08} to multi-level approximations of stochastic differential equations, significantly reduce the computational cost by
distributing the sampling across a hierarchy of discretizations and computing 
most samples on coarser grids. 
In recent years, MLMC methods thus have proved extremely efficient, 
versatile and robust for uncertainty quantification (UQ) in a wide range of 
problems governed by stochastic PDEs, including, to name just a few, 
elliptic equations with random coefficients \cite{CliffeGiles11,TSGU13,BarthSchwabZollinger2011}, 
parabolic PDEs \cite{Bart-Lang-Schwab}, conservation laws and 
compressible aerodynamics \cite{Mishra-Schwab MCOM 2012, Mishra-Schwab-Sukys-1, Pisaroni-Nobile-Leyland CMAME}, 
acoustic and seismic wave propagation \cite{Mishra-Schwab-Sukys-2,  Ballesio-Beck-Pandey-Parisi-vonSchwerin-Tempone}, 
obstacle problems \cite{Bierig-Chernov}, and multiscale problems \cite{Abdulle-Barth-Schwab}.

For time dependent problems, spatial coarsening on higher levels usually entails a larger time-step,
thereby reducing even further the computational cost of individual sample solutions $u(t,x; \omega)$.  
Geometric constraints or singularities, however, may impede uniform coarsening, thus forcing 
some elements in the mesh to remain small across all levels. 
If explicit time-stepping is used, the time-step will then be dictated by the smallest element on each level due to the CFL stability condition. 
Hence, standard explicit time-stepping schemes will become increasingly inefficient on coarser levels due to the ever more restrictive CFL condition. 

To overcome the increasingly stringent bottleneck across all levels,  we propose to use local time-stepping (LTS)  \cite{DiazGrote09,GroteMehlinMitkova15}
methods for the time integration on each level. By using a smaller time-step only inside the locally refined region, LTS methods thus permit to greatly improve the efficiency of MLMC methods even in the presence of complex geometry without sacrificing the explicitness and inherent parallelism.
We carefully analyze the computational cost of the MLMC algorithm in the
presence of locally refined meshes, first with a standard explicit time-stepping method and then with a local time-stepping method. In doing so, we differentiate between local refinement inside a small fixed region or towards a reentrant corner. 
In the former case, we quantify the gain of using LTS over standard time-stepping methods, whereas in 
the latter we prove that LTS even improves the asymptotic complexity.
In a series of numerical experiments, we illustrate the significant gain over standard time-stepping obtained by using
LTS methods for the time integration at all levels.

The rest of our paper is structured as follows. In Section 2, we recall the standard MLMC Algorithm \cite{Giles08}  
to estimate a generic QoI in any (finite or infinite dimensional) Hilbert space; for instance, the QoI may be a 
functional of $u$, or the solution itself. 
In Section 3, we 
present the cost comparison between MLMC with or without local time-stepping.
Finally, in Section 4, 
we present 
numerical experiments including an example with complex geometry in two space dimensions. 

\newpage
\section{Multilevel Monte Carlo method for wave equations with random coefficients}

\subsection{Model problem}

We consider  the wave equation with stochastic coefficient in a bounded domain $D \subset \RR^d$,
\begin{equation}
\left\{
\begin{aligned}
\frac{\partial^2}{\partial t^2} u(\mathbf{x},t,\omega) - \nabla \cdot \left(c^2(\mathbf{x},\omega) \; \nabla u(\mathbf{x},t,\omega)\right) &= f(\mathbf{x},t) & & \forall \; \mathbf{x} \in D, \; t \in (0,T], \; \omega \in \Omega, \\
u(\mathbf{x},0,\omega) &= u_0(\mathbf{x}) & & \forall \; \mathbf{x} \in D, \; \omega \in \Omega,  \\
\frac{\partial}{\partial t} u(\mathbf{x},0,\omega) &= v_0(\mathbf{x}) & & \forall \; \mathbf{x} \in D, \; \omega \in \Omega 
\end{aligned}
\right.
\label{eq:Wave_hd}
\end{equation}
with appropriate (deterministic) boundary conditions. 
Here, we model the uncertainty in the wave speed $c>0$ as a time independent random field $c : D \times \Omega\rightarrow \RR$, where $\Omega$ is the sample space of a complete probability space.
More general models that also allow randomness in the geometry, the forcing term, or an acceleration term could also be considered.
We are interested in estimating the expected value $\mathbb{E}[Q]$ of some quantity of interest (QoI) $Q: \omega \mapsto Q(\omega) = Q\left(u\left(\cdot,\cdot,\omega\right)\right) \in V$ related to the solution $u = u(\mathbf{x},t,\omega)$.

We consider a generic case where $V$ is a Hilbert space.
For instance, if $Q$ is the value of some functional of $u$, we simply set $V = \RR$ (cf. \cite{Appelo,TSGU13}).
On the other hand, if the QoI is the (weak) solution itself at a fixed time $T>0$, $Q(\omega) :=  u(\cdot,T,\omega) \in V=H^1(D)$ $\forall\, \omega\in\Omega$, we let $V$ correspond to the solution space.
This setting can easily be generalized to the (weaker) case when $Q(\omega) \in V$ for almost every $\omega\in\Omega$.

\subsection{Construction of the MLMC method}
To derive the MLMC approximation for \eqref{eq:Wave_hd}, let $Q_h$ denote a (numerical) \FE\ approximation to the QoI $Q = Q(u)$, with $h$ the discrete mesh size. 
To estimate $\mathbb{E}[Q]$, one computes  approximations or estimators $\widehat{Q}_h$ to $\mathbb{E}[Q_h]$. 
The accuracy of the approximations is quantified by the mean square error (MSE)
\begin{equation}
e\left(\widehat{Q}_h\right)^2 := \mathbb{E}\left[\left\|\widehat{Q}_h - \mathbb{E}[Q]\right\|_V^2\right].
\label{eq:errordef}
\end{equation}

The main idea of the MLMC method is to sample the QoI $Q$ from several approximations $Q_\ell := Q_{H_\ell}$ on a sequence of discretizations $\ell = 0,1,\ldots,L$.
Then, each level uses its individual mesh size $H_\ell = H_0 / 2^\ell$ in space and time-step $\Delta t_\ell$ in time, where the latter must satisfy a standard CFL condition, $\Delta t_\ell \leq C H_\ell$, for numerical stability, if explicit time-stepping is used. 

For the approximate solution on the finest level with mesh size $H_L$, it holds that
\begin{equation*}
\mathbb{E}[Q_L] = \mathbb{E}[Q_0] + \sum\limits_{\ell = 1}^{L} \mathbb{E} [Q_\ell - Q_{\ell - 1}] 
= \sum\limits_{\ell = 0}^{L} \mathbb{E}\left[\Delta Q_{\ell}\right]
\end{equation*}
with
\begin{equation*}
 \Delta Q_{\ell} := \left\{
\begin{array}{ll}
Q_{0}, & \ell = 0, \\
Q_{\ell} - Q_{{\ell-1}}, & \ell = 1,\ldots,L, 
\end{array}
\right.
\end{equation*}
random variables on $\Omega$.
This motivates the MLMC estimator of $\mathbb{E}[Q_L]$,
\begin{align}
\widehat{Q}^{\ML}_h : & &\Omega^M &\to V \nonumber\\
& &\hat{\omega} := \left\{ \omega^{(i,\ell)} \mid i = 1,\ldots,N_\ell, \, \ell = 0,\ldots,L \right\} &\mapsto \sum\limits_{\ell = 0}^{L} {\frac{1}{N_\ell} \sum\limits_{i = 1}^{N_\ell} {\left(\Delta Q_{\ell}\left(\omega^{(i,\ell)}\right)\right)}},
\label{eq:MLMCestimator}
\end{align}
where  
$M = \sum_\ell N_\ell$ with  $N_\ell$ denoting the size of the sample on each level $\ell$
and where the probability measure on $\Omega^M$ is the tensor product measure $\mathbb{P}^{\bigotimes M}$, i.e. $\hat{\omega}$ is an independent sample.
For efficiency of the estimator, it is crucial to judiciously choose the parameter values $N_\ell$ and $L$. 
If we set $\mu = \mathbb{E}[Q] \in V$ in  \eqref{eq:errordef}, we obtain
\begin{equation*}
\begin{aligned}
e\left(\widehat{Q}_h^{\ML}\right)^2 &= \mathbb{E}\left[\left<\widehat{Q}_h^{\ML} - \mu,\widehat{Q}_h^{ML} - \mu\right>_V\right] \\
&= \mathbb{E}\left[\left< \widehat{Q}_h^{\ML} - \mathbb{E}\left[Q_{L}\right] + \mathbb{E}\left[Q_{L}\right] - \mu,\widehat{Q}_h^{ML} - \mathbb{E}\left[Q_{L}\right] + \mathbb{E}\left[Q_{L}\right] - \mu \right>_V\right], \\
&= \mathbb{E}\left[\left\| \widehat{Q}_h^{\ML} - \mathbb{E}\left[Q_{L}\right] \right\|_V^2\right] + \left\| \mathbb{E}[Q_{L}] - \mu \right\|_V^2 + 2\,\mathbb{E}\left[\left< \widehat{Q}_h^{\ML} - \mathbb{E}\left[Q_{L}\right], \mathbb{E}[Q_{L}] - \mu \right>_V \right],
\end{aligned}
\end{equation*}
where 
the expectation is understood with respect to the product probability measure $\mathbb{P}^{\bigotimes M}$ and
the last term equals to zero as $\mathbb{E}[\widehat{Q}_h^{\ML}] = \mu$. 
For the first term in the last equation, we now insert the definition of the MLMC estimator (\ref{eq:MLMCestimator}), 
where we denote by $Q^{(i,\ell)}$, $i = 1,\ldots,N_\ell$, the independent, identically distributed (\iid) ``copies'' of the random variable $Q$ on level $\ell$.
Thus, we obtain
\begin{equation*}
\begin{aligned}
e\left(\widehat{Q}_h^{\ML}\right)^2 &= \mathbb{E}\left[\left\| \sum_{\ell=0}^L N_ \ell^{-1} \sum_{i=1}^{N_\ell} \left(\Delta Q_\ell^{(i,\ell)} - \mathbb{E}\left[\Delta Q_\ell\right]\right)  \right\|_V^2\right] + \left\| \mathbb{E}[Q_{L} - Q] \right\|_V^2 \\
&= \sum_{\ell=0}^{L} N_\ell^{-1} \mathbb{E}\left[\left\| \Delta Q_\ell - \mathbb{E}\left[\Delta Q_\ell\right] \right\|_V^2 \right] + \left\| \mathbb{E}[Q_{L} - Q] \right\|_V^2.
\end{aligned}
\end{equation*}
Note that the last step is due to the fact that $\Delta Q_\ell^{(i,\ell)}$ are 
indeed \iid
Let
\begin{equation}
V_\ell = \mathbb{E}\left[\left\| \Delta Q_\ell - \mathbb{E}\left[\Delta Q_\ell\right] \right\|_V^2 \right] 
= \mathbb{E}\left[\left\| \Delta Q_\ell \right\|_V^2 \right] - \left\|\mathbb{E}\left[\Delta Q_\ell\right] \right\|_V^2
\label{eq:Var_Def}
\end{equation}
denote the variance on a level $\ell = 0,1,\ldots,L$. 
Then, the mean square error can be split as
\begin{equation}
e\left(\widehat{Q}_h^{\ML}\right)^2 = \sum_{\ell=0}^{L} N_\ell^{-1} V_\ell + \left\| \mathbb{E}\left[Q_{L} - Q\right] \right\|_V^2,
\label{eq:MSE_splitform}
\end{equation}
where the first term is interpreted as the stochastic error or total variance of the estimator and the second term as the numerical error or bias term. 

One may now want to equilibrate those two parts. This means that for any given root MSE tolerance $\varepsilon$, we want to choose the number of refinement levels $L$ and number of samples $N_\ell$ such that both error contributions are bounded by $\varepsilon^2/2$. 
Note, however, that splitting the error equally is neither necessary nor optimal \cite{NobileTempone2015} and is therefore only a simplification.

Let $C_\ell$ denote the cost of computing a single sample $\Delta Q_\ell^{(i,\ell)}$. 
As a consequence, the total cost for computing the MLMC estimator is then given by the sum 
\begin{equation}
\mathcal{C}\left[\widehat{Q}^{\ML}_h\right] = \sum\limits_{\ell=0}^{L} N_\ell C_\ell.
\label{eq:MLMC_totalcost_def}
\end{equation}
If we assume $L$ and the overall cost to be fixed, the optimal number of samples $N_\ell$ is determined by minimizing the total variance $\sum_{\ell=0}^{L} N_\ell^{-1} V_\ell$, which yields the lower bound
\begin{equation}
N_\ell \geq \dfrac{2}{\varepsilon^2} \sqrt{\dfrac{V_\ell}{C_\ell}  } \sum\limits_{\ell^\prime=0}^{L} \sqrt{V_{\ell^\prime} C_{\ell^\prime} }.
\label{eq:optimal_Nl}
\end{equation}

When estimates for the numerical part of the error (\ref{eq:MSE_splitform}) are explicitly known a priori, it is possible to derive theoretically optimal choices for $L$ \cite{BarthSchwabZollinger2011}. 
In general, however, those constants are not known a priori.
Therefore, we instead opt for the approach as in \cite{CliffeGiles11}, which chooses $L$ ``on-the-fly''.

\begin{algorithm}
\caption{Multilevel Monte-Carlo}
\label{alg:MLMCGiles}
\begin{algorithmic}[1]
\STATE\label{alg:MLMCGiles_step1}{ 
Initialize $L = 2$ and set initial values for $N_\ell$ on levels $\ell=0,1,2$.
}
\WHILE{$N_\ell$ was increased previously in \cref{alg:MLMCGiles_step1}, \cref{alg:MLMCGiles_step2c} or \cref{alg:MLMCGiles_step2d} for any $\ell$}
\STATE\label{alg:MLMCGiles_step2a}{
Compute remaining $Q_\ell(w^{(i,\ell)})$ and $Q_{\ell-1}\left(\omega^{(i,\ell)}\right)$ for $i=1,\ldots,N_\ell$ on each level $\ell$.
} 
\STATE\label{alg:MLMCGiles_step2b}{
Compute $\widehat{Q}^{\ML}_{h}$ according to (\ref{eq:MLMCestimator}) and update estimates for $V_\ell$, $\ell=0,1,\ldots,L$.
} 
\STATE\label{alg:MLMCGiles_step2c}{
Update $N_\ell$ for $\ell=0,1,\ldots,L$ according to (\ref{eq:optimal_Nl}) using the new estimates for $V_\ell$.
} 
\IF{test for convergence of the bias term fails,}
\STATE\label{alg:MLMCGiles_step2d}{set L := L + 1 and initialize $N_L$. }
\ENDIF
\ENDWHILE
\end{algorithmic}
\end{algorithm}

In \cref{alg:MLMCGiles_step2b} in the above algorithm, the variances $V_\ell$ are estimated according to (\ref{eq:Var_Def}) by approximating,
\begin{equation}
V_\ell 
=  \mathbb{E}\left[\left\| \Delta Q_\ell \right\|_V^2 \right] - \left\|\mathbb{E}\left[\Delta Q_\ell\right] \right\|_V^2 
\approx \frac{1}{N_\ell-1} \left( \sum_{i=1}^{N_\ell} {\left\| \Delta Q_\ell^{(i,\ell)} \right\|_V^2} - \frac{1}{N_\ell} \left\| \sum_{i=1}^{N_\ell} \Delta Q_\ell^{(i,\ell)} \right\|_V^2\right).
\label{eq:Vell_est}
\end{equation}
In \cref{alg:MLMCGiles_step2d}, we test for convergence by verifying, if
\[
\left\| \mathbb{E}[Q_{L} - Q] \right\|_V^2 < \varepsilon^2/2
\] 
is satisfied for the root MSE tolerance $\varepsilon$. 
Since $\mathbb{E}[Q]$ depends on the (a priori unknown) exact solution $u$, 
we approximate the remaining error from previous levels. 
If we assume that $\left\| \mathbb{E}[Q_{\ell} - Q_{{\ell-1}}] \right\|_V^2 = \mathcal{O}(2^{-\alpha \ell})$ for $\ell \rightarrow \infty$ with $\alpha \geq 1$, we obtain
\begin{equation}
\mathbb{E}[Q - Q_{L}] = \sum\limits_{\ell=L+1}^{\infty} \mathbb{E}[Q_{\ell} - Q_{{\ell-1}}] \simeq \frac{\mathbb{E}[Q_{L} - Q_{{L-1}}]}{2^\alpha - 1},
\label{eq:bias_est}
\end{equation}
where we use the symbol ``$\simeq$'' in the following sense:
\[
A \simeq B \Longleftrightarrow c B \leq A \leq \widehat{c} B, \qquad c,\widehat{c} > 0.
\]

The total cost for computing the MLMC estimate $\widehat{Q}^{\ML}_h$ in a Hilbert space is characterized by the following theorem, similar to \cite[Theorem 1]{CliffeGiles11}.

\begin{theorem}
Suppose there exist constants $\alpha, \beta, \gamma > 0$, such that $\alpha \geq \frac{1}{2} \min(\beta,\gamma)$,
\begin{itemize}
\item $\left\| \mathbb{E}[Q_{h} - Q] \right\|_V \leq \mathcal{O}\left(h^{\alpha}\right)$ as $h \rightarrow 0$,
\item $V_\ell \leq \mathcal{O}\left(H_\ell^\beta\right)$ and
\item $C_\ell \leq \mathcal{O}\left(H_\ell^{-\gamma}\right)$ as $\ell \rightarrow \infty$.
\end{itemize}
Then for any $\varepsilon$ small enough there exist a total number of levels $L$ and a number of samples $N_\ell$, $\ell=0,\ldots,L$, such that the root mean square error $e(\widehat{Q}^{\ML}_h)$ is bounded by $\varepsilon$ and the total cost behaves like
\begin{equation}
\mathcal{C}\left[\widehat{Q}_h^{\ML}\right] \leq \mathcal{O}\left(
\left\{
\begin{aligned}
&\varepsilon^{-2}, & & \beta > \gamma \\
&\varepsilon^{-2}(\log \varepsilon)^2, & & \beta = \gamma \\
&\varepsilon^{-2-\frac{\gamma-\beta}{\alpha}}, & & \beta < \gamma \\
\end{aligned}
\right.
\right).
\label{eq:MLMC_totalcost_thm}
\end{equation}
\label{thm:totalcost}
\end{theorem}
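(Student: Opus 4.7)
The plan is to start from the mean–square–error splitting in \eqref{eq:MSE_splitform}, choose $L$ so that the bias term is $\leq\varepsilon^2/2$, then choose the $N_\ell$ according to \eqref{eq:optimal_Nl} so that the statistical part is $\leq\varepsilon^2/2$, and finally substitute back into \eqref{eq:MLMC_totalcost_def} and carefully sum the three possible geometric series.

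\textbf{Choice of $L$.} From the first assumption, $\|\mathbb{E}[Q_L-Q]\|_V^2\leq C\,H_L^{2\alpha}=C(H_0 2^{-L})^{2\alpha}$. Requiring $C(H_0 2^{-L})^{2\alpha}\leq \varepsilon^2/2$ gives
\[
L \;=\; \left\lceil \tfrac{1}{\alpha}\log_2(1/\varepsilon)+c_0\right\rceil
\qquad\text{for some constant $c_0$ depending only on $\alpha,H_0,C$,}
\]
so in particular $H_L\simeq \varepsilon^{1/\alpha}$ and $2^L\simeq \varepsilon^{-1/\alpha}$.

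\textbf{Choice of $N_\ell$ and bound on the total variance.} Plugging \eqref{eq:optimal_Nl} into the first summand of \eqref{eq:MSE_splitform} gives, up to rounding, $\sum_\ell N_\ell^{-1}V_\ell\leq \varepsilon^2/2$. Substituting the $N_\ell$ back into \eqref{eq:MLMC_totalcost_def} yields the standard key identity
\[
\mathcal{C}\!\left[\widehat{Q}_h^{\ML}\right] \;\leq\; \frac{2}{\varepsilon^2}\Bigl(\sum_{\ell=0}^{L}\sqrt{V_\ell C_\ell}\Bigr)^{\!2} \;+\; \sum_{\ell=0}^{L} C_\ell,
\]
where the second summand arises from rounding each $N_\ell$ up to the next integer (in particular from the ``$+1$'' that forces at least one sample per level).

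\textbf{Summing the geometric series.} Using $V_\ell\leq C\,H_\ell^\beta$ and $C_\ell\leq C\,H_\ell^{-\gamma}$, together with $H_\ell=H_0 2^{-\ell}$, one gets $\sqrt{V_\ell C_\ell}\leq C\,2^{-\ell(\beta-\gamma)/2}$. The three cases of the theorem correspond to the sign of $\beta-\gamma$:
\begin{itemize}
\item If $\beta>\gamma$, the terms form a convergent geometric series, so $\sum_\ell\sqrt{V_\ell C_\ell}=O(1)$ and $\mathcal{C}=O(\varepsilon^{-2})$.
\item If $\beta=\gamma$, each term is $O(1)$, so $\sum_\ell\sqrt{V_\ell C_\ell}=O(L)=O(\log(1/\varepsilon))$ and $\mathcal{C}=O(\varepsilon^{-2}(\log\varepsilon)^2)$.
\item If $\beta<\gamma$, the last term dominates and $\sum_\ell\sqrt{V_\ell C_\ell}\simeq 2^{L(\gamma-\beta)/2}\simeq \varepsilon^{-(\gamma-\beta)/(2\alpha)}$, which squared gives $\mathcal{C}=O(\varepsilon^{-2-(\gamma-\beta)/\alpha})$.
\end{itemize}

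\textbf{The delicate point.} The main obstacle is verifying that the rounding contribution $\sum_{\ell=0}^L C_\ell$ is absorbed into the stated rates. Since $C_\ell\leq C\,H_\ell^{-\gamma}$ is geometric with ratio $2^\gamma>1$, this sum is dominated by its last term, $\sum_\ell C_\ell\simeq 2^{\gamma L}\simeq \varepsilon^{-\gamma/\alpha}$. For this to be absorbed into the bound in case $\beta\geq\gamma$ we need $\gamma/\alpha\leq 2$, and in case $\beta<\gamma$ we need $\gamma/\alpha\leq 2+(\gamma-\beta)/\alpha$, i.e.\ $\beta\leq 2\alpha$. Both conditions are exactly guaranteed by the hypothesis $\alpha\geq\tfrac12\min(\beta,\gamma)$, which is precisely why it appears in the statement. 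Once this check is in place, combining the three case estimates yields \eqref{eq:MLMC_totalcost_thm}.
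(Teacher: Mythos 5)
Your proposal is correct and follows essentially the same route as the paper, which omits the detailed proof in favor of citing Cliffe--Giles and sketching exactly the core step you carry out: inserting the optimal $N_\ell$ from \eqref{eq:optimal_Nl} into \eqref{eq:MLMC_totalcost_def} to get $\mathcal{C}\simeq \varepsilon^{-2}\bigl(\sum_\ell H_\ell^{(\beta-\gamma)/2}\bigr)^2$ and treating the three cases of the geometric sum. Your additional check that the rounding term $\sum_\ell C_\ell$ is absorbed precisely because $\alpha\geq\tfrac12\min(\beta,\gamma)$ is the right way to fill in the detail the paper leaves implicit.
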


Since the proof of the above theorem closely follows along the lines of \cite[Appendix A]{CliffeGiles11},
it is omitted here. The main difference results from assuming that $\mathbb{E}[Q]$ is an element of a generic Hilbert space and from the corresponding definitions of the estimator's total variance and numerical bias according to \eqref{eq:Var_Def} and \eqref{eq:MSE_splitform}.

To interpret the above theorem, it is useful to have a look at the core idea of its proof.
Starting with the basic definition of the costs \eqref{eq:MLMC_totalcost_def}, we insert the optimal choice for the number of samples \eqref{eq:optimal_Nl} and the assumptions on $V_\ell$ and $C_\ell$,
\begin{equation*}
\mathcal{C}\left[\widehat{Q}^{\ML}_h\right] = 
\sum\limits_{l=0}^{L} N_\ell C_\ell \simeq 
\frac{2}{\epsilon^2} \left(\sum\limits_{\ell=0}^{L} \sqrt{V_{\ell} C_{\ell} }\right)^2 
\simeq 
\
\varepsilon^{-2} \left(\sum\limits_{\ell=0}^{L} {H_\ell^{\frac{\beta-\gamma}{2}} }\right)^2.
\end{equation*}
A case-by-case analysis of the sum then leads to the estimate in (\ref{eq:MLMC_totalcost_thm}).
Note that the case $\beta > \gamma$ means that the total cost is dominated by the coarsest levels, whereas $\beta < \gamma$ corresponds to a case where most of the computational effort is found on the finest levels.


\section{MLMC and local time-stepping for the wave equation}
Here, we estimate the computational cost of the MLMC algorithm in the
presence of locally refined meshes, first with a standard explicit time-stepping method and then with a local time-stepping method. In doing so, we differentiate between local refinement inside a small fixed region or towards a reentrant corner. 
\subsection{Standard discretizations on a (quasi-)uniform mesh}
In \cref{alg:MLMCGiles}, the numerical method to compute any approximation for a fixed sample $\omega\in\Omega$ was not specified further.
In this section, we will take a closer look on the methods used to compute numerical solutions to (\ref{eq:Wave_hd}) for fixed $\omega\in\Omega$.
We start by discretizing the wave equation (\ref{eq:Wave_hd}) in space with either standard continuous ($H^1$-conforming) finite elements with mass-lumping or an appropriate discontinuous Galerkin (DG) discretization, for example symmetric IP-DG \cite{GSS06} or HDG \cite{ExplicitHDG16}. 
The standard continuous Galerkin formulation of the wave equation (\ref{eq:Wave_hd}) starts from its weak formulation \cite{LionsMagenes}. We then wish to approximate the solution $u(t,\cdot)$ in a suitable Finite Element space $V_H$ and thus consider the semidiscrete Galerkin approximation: find $u_H : [0,T] \to V_H$ such that
\begin{align*}
\left( \frac{\partial^2}{\partial t^2} u_H , v \right) + \left( c \nabla u_H , c \nabla v \right) &= \left( f,v \right) & &\forall\, v \in V_H, \, t \in (0,T], \\
u_H |_{t=0} &= \Pi_H u_0, \\
\frac{\partial}{\partial t} u_H |_{t=0} &= \Pi_H v_0,
\end{align*}
where $(\cdot,\cdot)$ and $\Pi_H$ denote the standard $L^2$ scalar product and the $L^2$-projection onto $V_H$, respectively.
Furthermore, let $\mathbf{u}$ denote the vector of coefficients of $u_H$ with respect to a basis $(\varphi_i)_{i=1,\ldots,n}$ of $V_H$ and let the mass and stiffness matrices, $\mathbf{M}$ and $\mathbf{K}$, together with the load vector ${\bf F}$ be defined as
\[
M_{i,j} = \left( \varphi_j , \varphi_i \right), \quad K_{i,j} = \left( c \nabla \varphi_j , c \nabla \varphi_i \right), \quad F_i(t) = \left(f(t,\cdot),\varphi_i\right),
\]
respectively. 
Here to compute the entries in the mass matrix $\mathbf{M}$, we use judicious quadrature rules which yield  a diagonal matrix while preserving the order of accuracy  (aka ''mass-lumping'') \cite{COHEN, mass_lumping_2d}.
This leads to a second-order system of ordinary differential equations
\begin{equation}
\left\{
\begin{aligned}
\mathbf{M} \ddot{\mathbf{u}}(t) + \mathbf{K u}(t) &= \mathbf{F}(t), \\
\mathbf{u}(0) &= \mathbf{u}_0, \\
\dot{\mathbf{u}}(0) &= \mathbf{v}_0.
\end{aligned}
\right.
\label{eq:WaveND_Galerkin}
\end{equation} 
Since $\mathbf{K}$ depends continuously on $c$, a random field with values in $L^\infty(D)$,
$\mathbf{u}$ also depends continuously on $c$ and is therefore measurable with respect to $\Omega$. 
Setting
$\mathbf{z} = \mathbf{M}^{1/2} \mathbf{u}$, $\mathbf{A} = \mathbf{M}^{-1/2} \mathbf{K M}^{-1/2}$ and $\widetilde{\mathbf{F}} = \mathbf{M}^{-1/2} \mathbf{F}$, we rewrite (\ref{eq:WaveND_Galerkin}) as
\begin{equation}
\ddot{\mathbf{z}}(t) + \mathbf{A z}(t) = \widetilde{\mathbf{F}}(t),
\label{eq:Wave_Galerkin_massI}
\end{equation}
which can now be discretized in time by a standard explicit time-stepping scheme. 

For $\widetilde{\mathbf{F}}_n = \widetilde{\mathbf{F}}(t_n)$, the second-order leapfrog (LF) scheme with time step $\Delta t > 0$ is then given by
\begin{equation}
\left\{
\begin{aligned}
\mathbf{z}_{n+1} - 2 \mathbf{z}_n + \mathbf{z}_{n-1} 
&= \Delta t^2 \left( \widetilde{\mathbf{F}}_n - \mathbf{A z}_n  \right) & & \forall n \geq 1, \\
\mathbf{z}_0 &= \mathbf{M}^{1/2} \mathbf{u}_0, \\
\mathbf{z}_1 &= \mathbf{z}_0 - \Delta t \,\mathbf{M}^{1/2} \mathbf{v}_0 + \frac{\Delta t^2}{2} \left( \widetilde{\mathbf{F}}_0 - \mathbf{A z}_0 \right),
\end{aligned}
\right.
\label{eq:LF2scheme}
\end{equation}
where $\mathbf{z}_n \simeq \mathbf{z}(t_n,\omega)$ for a fixed $\omega \in \Omega$. 

We now apply \cref{thm:totalcost} on the computational complexity of MLMC methods to the above discrete Galerkin formulation of the stochastic wave equation.
Although in \eqref{eq:LF2scheme} we 
opt for the popular second-order leapfrog scheme \cite{HLW}, all the estimates derived below in fact remain
identical for any standard explicit time-stepping method, such as Runge-Kutta or Adams-Bashforth methods.  

\begin{corollary}

Let $\widehat{Q}_h^{\ML}$ be the MLMC estimator of $\mathbb{E}[Q] \in V$, where $Q: C^0\left(0,T;L^2\left(D\right)\right) \to V$ is Lipschitz continuous, $D \subset \mathbb{R}^d$.
Assume $u(\cdot,\cdot,\omega)$ to be sufficiently regular uniformly in $\omega\in\Omega$ and let $Q_\ell = Q\left(u_{H_\ell}\right)$, where $u_{H_\ell}$ is computed using a finite element space discretization of order $k$ and explicit time integration of order $m$, 
such that
\begin{equation}
\left\| \mathbb{E}\left[Q_{\ell} - Q\right] \right\|_V \leq \mathcal{O}\left(\left(H_\ell\right)^{k+1} + \left(\Delta t_\ell\right)^m\right),
\label{eq:MLMCLF_WeakConvAssum}
\end{equation}
where $\Delta t_\ell$ satisfies a CFL stability condition $\Delta t_\ell \simeq H_\ell / k^2$.
Furthermore let $\beta > 0$ be a constant such that 
\(
V_\ell \leq \mathcal{O}\left(\left(H_\ell\right)^\beta\right)
\)
and
\[
2\min\{k+1,m\} \geq \min\{\beta, d+1\},
\]
and assume that the cost for evaluating the random field $c$ at the quadrature nodes is bounded by $\mathcal{O}\left(k^{2d} H_\ell^{-d}\right)$. 

Then for $\varepsilon>0$ sufficiently small, there exist a total number of levels $L$ and a number of samples $N_\ell$, $\ell=0,\ldots,L$, such that the root mean square error $e(\widehat{Q}^{\ML}_h)$ is bounded by $\varepsilon$ and the total cost behaves like
\begin{equation}
\mathcal{C}\left[\widehat{Q}_h^{\ML}\right] \leq \mathcal{O}\left(
\left\{
\begin{aligned}
&\varepsilon^{-2}, & & \beta > d+1 \\
&\varepsilon^{-2}(\log \varepsilon)^2, & & \beta = d+1 \\
&\varepsilon^{-2-\frac{d+1-\beta}{\min\{m,k+1\}}}, & & \beta < d+1 \\
\end{aligned}
\right.
\right).
\label{eq:MLMCLF_totalcost_thm}
\end{equation}
\label{cor:totalcostLF}
\end{corollary}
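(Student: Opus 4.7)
The plan is to apply \cref{thm:totalcost} by reading off the exponents $\alpha$, $\beta$, $\gamma$ from the hypotheses of the corollary. The variance rate is already given directly as $V_\ell \leq \mathcal{O}(H_\ell^\beta)$, so only the weak error exponent $\alpha$ and the per-sample cost exponent $\gamma$ need to be identified; once this is done, verifying the compatibility condition $\alpha \geq \tfrac12 \min\{\beta,\gamma\}$ and substituting into \eqref{eq:MLMC_totalcost_thm} will deliver the three cases in \eqref{eq:MLMCLF_totalcost_thm}.

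For the weak error exponent, I would insert the CFL relation $\Delta t_\ell \simeq H_\ell/k^2$ (treating the polynomial degree $k$ as fixed) into \eqref{eq:MLMCLF_WeakConvAssum}, obtaining
\[
\bigl\| \mathbb{E}[Q_\ell - Q] \bigr\|_V \leq \mathcal{O}\bigl(H_\ell^{k+1} + H_\ell^m\bigr) = \mathcal{O}\bigl(H_\ell^{\min\{k+1,m\}}\bigr),
\]
so that $\alpha = \min\{k+1,m\}$ fulfils the first assumption of \cref{thm:totalcost}.

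For the per-sample cost exponent $\gamma$, I would decompose the cost $C_\ell$ of evaluating one $\Delta Q_\ell^{(i,\ell)}$ into three contributions: the one-time assembly of the random stiffness matrix, bounded by the assumed $\mathcal{O}(k^{2d}H_\ell^{-d})$ cost of evaluating $c$ at all quadrature nodes; the number of leapfrog steps $\mathcal{O}(T/\Delta t_\ell)=\mathcal{O}(k^2 H_\ell^{-1})$; and the per-step cost $\mathcal{O}(k^d H_\ell^{-d})$ of a sparse matrix-vector product with $\mathbf{A}$, made possible by mass lumping. Computing $\Delta Q_\ell$ also involves a solve on level $\ell-1$, but that is strictly cheaper and absorbed into the $\mathcal{O}(\cdot)$. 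The time-stepping term dominates the assembly term and yields $C_\ell = \mathcal{O}(k^{d+2}H_\ell^{-(d+1)})$, hence $\gamma = d+1$ when $k$ is held fixed.

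With these identifications, the compatibility condition $\alpha \geq \tfrac12 \min\{\beta,\gamma\}$ of \cref{thm:totalcost} reads $\min\{k+1,m\} \geq \tfrac12 \min\{\beta,d+1\}$, which is exactly the assumption $2\min\{k+1,m\} \geq \min\{\beta,d+1\}$ stated in the corollary. Substituting $\alpha = \min\{k+1,m\}$ and $\gamma = d+1$ into the three cases of \eqref{eq:MLMC_totalcost_thm} reproduces \eqref{eq:MLMCLF_totalcost_thm} verbatim, distinguishing $\beta > d+1$, $\beta = d+1$, and $\beta < d+1$. The main obstacle here is not mathematical depth but careful bookkeeping: one must be explicit about what is held fixed (the polynomial degree $k$ and the final time $T$) before collapsing prefactors into $\mathcal{O}(\cdot)$, and one must verify that the random-field assembly cost $\mathcal{O}(k^{2d}H_\ell^{-d})$ is indeed dominated by the time-stepping cost $\mathcal{O}(k^{d+2}H_\ell^{-(d+1)})$ as $H_\ell \to 0$, so that it does not inflate $\gamma$.
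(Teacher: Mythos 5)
Your proposal is correct and follows essentially the same route as the paper: identify $\alpha=\min\{k+1,m\}$ via the CFL relation, identify $\gamma=d+1$ from (number of time-steps)$\times$(cost per matrix--vector product) with the random-field evaluation cost absorbed, and invoke \cref{thm:totalcost}. The only discrepancy is cosmetic: the paper counts the per-step matrix--vector product as $k^{2d}H_\ell^{-d}$ (each of the $\mathcal{O}(H_\ell^{-d})$ elements contributes a $k^d\times k^d$ local block) rather than your $k^{d}H_\ell^{-d}$, giving $C_\ell=\mathcal{O}(k^{2(d+1)}H_\ell^{-(d+1)})$; since $k$ is held fixed this does not change $\gamma$ or the conclusion.
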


\begin{remark}
For $Q$ Lipschitz continuous, it is reasonable to assume for sufficiently regular solutions that the weak convergence rates assumed in \eqref{eq:MLMCLF_WeakConvAssum} are in fact a consequence of the other assumptions.
The above result can be generalized to 
rougher solutions $u(\cdot,\cdot,\omega)$ or
a broader class of quantities of interest $Q$, with possibly modified convergence rates in (\ref{eq:MLMCLF_totalcost_thm}).
\end{remark}

\begin{proof}
The computational cost $C_\ell$ of solving one wave equation with an explicit time-stepping scheme on a (quasi-)uniform mesh of size $H_\ell$ is computed as the number of time-steps times the costs in each step, which are dominated by one or more matrix-vector multiplications of type ``$\mathbf{A u}_n$''.
The cost of each such matrix-vector multiplication is approximately the number of degrees of freedom per element squared, $k^{2d}$, times the number of elements, which is proportional to $ H_\ell^{-d}$. 
Due to the CFL stability condition, the number of time-steps used on each level is inversely proportional to $\Delta t_\ell \simeq H_\ell / k^2$.
Hence, the total computational cost for solving one wave equation on level $\ell$ is
$C_\ell = c_3 \, k^{2(d+1)} \, H_\ell^{-(d+1)}$,
as by assumption the cost for evaluating the random field at the quadrature nodes is negligible.
Since $\alpha = \min\{k+1,m\}$ because of the CFL restriction on $\Delta t_\ell$, the corollary directly follows from \cref{thm:totalcost}.
\end{proof}

\subsection{Effect of local mesh refinement}
\label{sec:localref}
Due to geometric constraints or accuracy requirements, 
it may not be optimal or even possible to coarsen the entire mesh uniformly
in the presence of singularities or complex geometry, see
\cref{fig:localmesh_2Dchannel}.
\begin{figure}
\centering
\includegraphics[angle=90,width=0.7\textwidth]{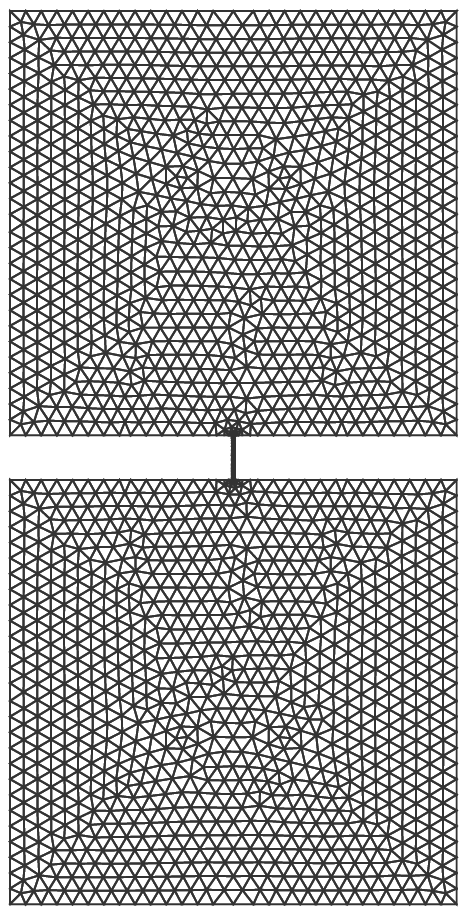}
\caption{2D Example of a computational domain with a locally refined mesh.}
\label{fig:localmesh_2Dchannel}
\end{figure}  
This results in a splitting of the computational domain, where small parts of the geometry $D_{\fine}$ might not allow for elements larger than a given $h^{\operatorname*{f}}$, and a coarse part $D_{\coar}$, where the mesh size on the coarsest level $H_0 \gg h^{\operatorname*{f}}$ can 
be much larger, while still resolving the dominant wave-length in the problem.
An example is illustrated in \cref{fig:localmesh_1D} for the domain $D = (0,6) \subset \RR$. 
\begin{figure}
\centering
\includegraphics[width=0.7\textwidth]{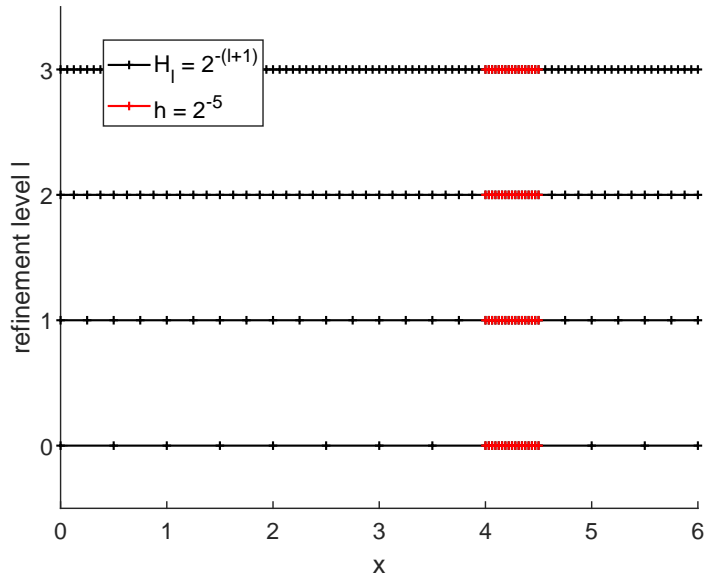}
\caption{Locally refined sequence of meshes on $[0,6]$ for levels $\ell = 0,1,2,3$. In the coarse part the mesh width is halved on each level, $h^c_\ell = 1/2^{\ell+1}$. In the fine part the width $h^f$ stays constant at $1/32$. }
\label{fig:localmesh_1D}
\end{figure}  

One of the key ideas of the MLMC method is to evaluate most of the samples on the coarsest levels and only a few on the finest. 
For explicit time-stepping schemes the maximal time-step depends on the size of the smallest elements of the mesh. 
Hence, if parts of the mesh consist of a few tiny elements across all levels,
 for every sampling the maximal time-step will be constrained by those elements in $D_{\fine}$. 
On coarser levels, standard explicit time-stepping schemes then become increasingly inefficient due to the ever more restrictive CFL condition. 

In order to quantify this effect, we want to estimate the computational cost of multilevel Monte Carlo with \FE s and leapfrog for the wave equation in the presence of local refinement on a fixed hierarchy of discretizations.
Note that another type of local mesh refinement will be 
addressed
in section \ref{sec:Lshape}, which are graded meshes for domains with a reentrant corner.

For the sake of simplicity, we consider (\ref{eq:WaveND_Galerkin}) with right-hand side equal to zero and assume that the (integer) coarse-to-fine mesh size ratio $p_\ell = \left\lceil H_\ell / h^{\operatorname*{f}} \right\rceil$ on any level $\ell=0,1,\ldots,L$ is of the form $p_\ell = p_0 / 2^\ell = 2^{L-\ell} p_L$, where we assume that $p_L \geq 1$ and,
thus, $H_L \geq h_{\min}$, i.e. the mesh in $D_{\coar}$ with mesh size $H_\ell = H_0 / 2^\ell$ remains coarser than the mesh in $D_{\fine}$ through all levels (except possibly the finest).
In the following, let $\Delta t_\ell \simeq H_\ell / k^2$ be the ``optimal'' time-step on a uniform mesh, where $k$ denotes the polynomial degree of the FEM basis functions. 
Then, the number of FEM degrees of freedom in the coarse and fine parts are respectively given by
\begin{equation}
n_\ell^{\operatorname*{c}} \simeq (1-r)k^d \frac{1}{\left(H_\ell\right)^{d}} , \qquad n^{\operatorname*{f}} \simeq r k^d \frac{1}{\left(h^{\operatorname*{f}}\right)^d} = r k^d \left(\frac{p_\ell}{H_{\ell}}\right)^d \quad\forall\ell,
\label{eq:ndofs}
\end{equation}
where $r$ denotes the relative volume of the locally refined part $D_{\fine}$ with respect to the whole domain $D$.
As the computational cost of the leapfrog method (\ref{eq:LF2scheme}) is dominated by matrix-vector multiplications $\mathbf{A  u}_n$, we will focus on these. 
With (\ref{eq:ndofs}), each of these require approximately
\[
k^d \left( n_\ell^{\operatorname*{c}} + n^{\operatorname*{f}} \right) \simeq \frac{k^{2 d}}{H_\ell^d} \left((1-r)+r p_\ell^d\right)
\]
operations. 
Due to CFL restriction, in order to proceed the simulation from $0$ to $T$, standard LF needs approximately
\[
\frac{T}{\Delta t_\ell / p_\ell} \simeq T \frac{k^2 p_\ell}{H_\ell}
\]
time-steps of size $\Delta t_\ell / p_\ell \simeq h_{\min} / k^2 \,\forall\ell$. 
Hence, ignoring constants, the cost of solving \emph{one} wave equation on any level $\ell$ for a particular $\omega\in\Omega$ approximately equals
\begin{equation}
\mathcal{C}_{\LF}\left[u_\ell(\omega)\right] \simeq
\left(T \frac{k^2 p_\ell}{H_\ell}\right) \cdot \left(\frac{k^{2 d}}{H_\ell^d} \left((1-r)+r p_\ell^d\right)\right) =
T k^{2(d+1)} \, \frac{p_\ell}{H_\ell^{d+1}} \left( (1-r)+r p_\ell^d \right).
\label{eq:cost_waveeq_LF}
\end{equation}
Thus, we can estimate the cost of computing \emph{one} sample $\Delta Q_{{\ell}}^{(i,\ell)}$ using standard LF, where we assume that the cost of computing any $Q_\ell^{(i,\ell)}$ is dominated by computing $u_\ell^{(i,\ell)}$. 
Again, we also assume that the cost for evaluating the random field $c$ at the quadrature nodes is bounded by $\mathcal{O}\left(k^{2d} H_\ell^{-d}\right)$ and thus negligible with respect to the overall cost for computing the numerical solution.
For $\ell = 0$, we have
\begin{equation*}
C_0^{\LF} = \mathcal{C}_{\LF}\left[u_0\left(\omega^{(i,\ell)}\right)\right] \simeq \frac{T k^{2(d+1)}}{H_0^{d+1}} \left( (1-r)\, p_0 + r\, p_0^{d+1} \right)
\end{equation*}
and for $\ell \geq 1$, where we use that $H_\ell = H_0 / 2^\ell$ and $p_\ell = 2^{-\ell} p_0$,
\begin{align}
C_\ell^{\LF} &= \mathcal{C}_{\LF}\left[u_\ell\left(\omega^{(i,\ell)}\right)\right] + \mathcal{C}_{\LF}\left[u_{\ell-1}\left(\omega^{(i,\ell)}\right)\right] \nonumber\\
&\simeq T k^{2(d+1)} \left( \frac{p_\ell}{H_\ell^{d+1}} \left( (1-r)+r p_\ell^d \right) + \frac{p_{\ell-1}}{H_{\ell-1}^{d+1}} \left( (1-r)+r p_{\ell-1}^d \right) \right) \nonumber\\
&\simeq \frac{T k^{2(d+1)}}{H_0^{d+1}} \left( \left(1-r\right) \frac{2^d+1}{2^d} 2^{d\ell} p_0 + r \cdot 2 p_0^{d+1} \right).
\label{eq:C_l_LF}
\end{align}
By using (\ref{eq:optimal_Nl}), we obtain
for the estimate of the total computational cost:
\begin{align}
\mathcal{C}_{\LF} &= \sum\limits_{\ell = 0}^{L} N_\ell C_\ell^{\LF} \simeq \dfrac{2}{\varepsilon^2} \left( \sum\limits_{\ell = 0}^{L} \sqrt{V_\ell C_\ell^{\LF}} \right)^2 \nonumber\\
&\simeq \frac{2\, T k^{2(d+1)}}{\varepsilon^2 H_0^{d+1}} \left( \sqrt{\left( (1-r)\, p_0+r\, p_0^{d+1} \right) V_0} \right. \nonumber\\
&\qquad\qquad\qquad \left. + \sum\limits_{\ell = 1}^{L} \sqrt{\left( \left(1-r\right) \frac{2^d+1}{2^d} 2^{d\ell} p_0 + r \cdot 2 p_0^{d+1} \right) V_\ell} \right)^2.
\label{eq:totalcost_MLMCLF}
\end{align}

\subsection{Local time-stepping}
\label{sec:LTS}
To overcome the bottleneck due to local mesh refinement on explicit time-stepping methods, we now consider explicit local time-stepping (LTS) methods \cite{DiazGrote09} for the numerical solution of  (\ref{eq:Wave_Galerkin_massI}). 
First, we split the vector of unknowns $\mathbf{z}$ into coarse and fine parts as
\[
\mathbf{z} = (\mathbf{I}-\mathbf{P}) \mathbf{z} + \mathbf{P z},
\]
where $\mathbf{P}$ is a diagonal matrix with all entries equal to zero or one, identifying the degrees of freedom in the refined part of the mesh $D_{\fine}$ and all elements adjacent to it.  Hence, $\mathbf{P z}$ contains those degrees of freedom associated with the locally refined part of the mesh.

The original leapfrog-based local time-stepping (LF-LTS) methods for solving second-order wave equations (\ref{eq:Wave_Galerkin_massI}) with arbitrarily high accuracy was proposed in \cite{DiazGrote09,GroteMitkova2010}.
Inside the ``coarse'' part of the mesh, it uses the standard LF method with a global time-step $\Delta t$.
Inside the ``fine'' part, however, the method loops over $p$ local LF steps of size $\Delta t / p$, where $p \geq H / h^{\operatorname*{f}}$ is a positive integer.
When combined with a mass-lumped conforming \cite{COHEN,mass_lumping_2d} or discontinuous Galerkin FE discretization \cite{GSS06} in space, the resulting method is truly explicit and inherently parallel; it was successfully applied to 3D seismic wave propagation \cite{MZKM13}. 
A multilevel version was later proposed \cite{DiazGrote15} and achieved high parallel efficiency on an HPC architecture \cite{Rietmann2017}.
%

Optimal convergence rates for the LF-LTS method from \cite{DiazGrote09} were derived for a conforming FEM discretization, albeit under a CFL condition where $\Delta t$ in fact depends on the smallest elements in the mesh \cite{GroteMehlinSauter18}. 
To prove optimal $L^2$ convergence rates under a CFL condition independent of $p$, a stabilized version of LF-LTS was devised recently in \cite{GroteMSauter20}.
The same algorithm was also proposed independently by the group of Hochbruck \cite{CarleHochbruckPC}.
For this method, we consider stabilized Chebyshev polynomials \cite{HV03}, based on Chebyshev polynomials of the first kind \cite{Rivlin}, denoted by $T_p$, and a stabilization parameter $0 \leq \nu \leq 1$.
Let us further define the constants
\[
\delta_{p,\nu} := 1 + \frac{\nu}{p^2}, \qquad \omega_{p,\nu} := 2 \, \frac{T_p^\prime\left(\delta_{p,\nu}\right)}{T_p\left(\delta_{p,\nu}\right)}
\]
and
\[
\beta_k := \frac{T_{k-1}\left( \delta_{p,\nu} \right)}{T_{k+1}\left( \delta_{p,\nu} \right)}, \quad \beta_{k+1/2} := \frac{ T_{k}\left( \delta_{p,\nu} \right) }{ T_{k+1}\left( \delta_{p,\nu} \right) } \quad \forall\,1 \leq k \leq p-1.
\]
For example, for $p=2$, $\nu=0.01$, we have $\delta_{p,\nu} = 1.005$ and
\begin{align*}
\omega_{p,\nu} &=  2 \, \frac{T_2^\prime\left(\delta_{p,\nu}\right)}{T_2\left(\delta_{p,\nu}\right)} =
 2 \,\frac{4 \delta_{p,\nu}}{2 \delta_{p,\nu}^2-1} 
 \approx 7.882, \\
\beta_1 &= \frac{T_{0}\left( \delta_{p,\nu} \right)}{T_{2}\left( \delta_{p,\nu} \right)} = \frac{1}{2 \delta_{p,\nu}^2-1} \approx 0.981, \\
\beta_{3/2} &= \frac{ T_{1}\left( \delta_{p,\nu} \right) }{ T_{2}\left( \delta_{p,\nu} \right) }
= \frac{\delta_{p,\nu}}{2 \delta_{p,\nu}^2-1} \approx 0.985 .
\end{align*}
Then, the stabilized LF-LTS algorithm to compute $\mathbf{z}_{n+1} \simeq \mathbf{z}(t_{n+1})$ for given $\mathbf{z}_n$, $\mathbf{z}_{n-1}$ for the wave equation, here with zero forcing for simplicity, is given as follows.

\begin{algorithm}
\caption{Stabilized LF-LTS}
\label{AlgStabLTS}
\begin{algorithmic}[1]
\STATE {
Set $\mathbf{q}_0^{n} := \mathbf{z}_{n}$ and \, $\mathbf{w}_{n} = \mathbf{A} \left( \mathbf{I} - \mathbf{P} \right) \mathbf{q}_0^{n}$.
}
\STATE {
Compute 
\[
\mathbf{q}_{1/p}^{n} = \mathbf{q}_0^{n} - \frac{1}{2} \left( \frac{\Delta t}{p} \right)^2 \frac{2p^2}{\omega_{p,\nu}\delta_{p,\nu}}
\left( \mathbf{w}_{n} + \mathbf{A} \mathbf{P}\, \mathbf{q}_0^{n} \right).
\]
}
\FOR {$m=1,\ldots,p-1$} 
\STATE{Compute
\begin{align*}
\mathbf{q}_{(m+1)/p}^{n} &= \left(1+\beta_k\right) \,\mathbf{q}_{m/p}^{n} - \beta_k \,\mathbf{q}_{(m-1)/p}^{n}  - \left( \frac{\Delta t}{p} \right)^2 \frac{2p^2}{\omega_{p,\nu}} \,\beta_{k+1/2} \left( \mathbf{w}_{n} + \mathbf{A} \mathbf{P}\, \mathbf{q}_{m/p}^{n} \right).
\end{align*}
}
\ENDFOR
\STATE {
Compute \,
\(
\mathbf{z}_{n+1} = - \mathbf{z}_{n-1} + 2 \, \mathbf{q}_{1}^{n}.
\)
}
\end{algorithmic}
\end{algorithm}
If the fraction of nonzero entries in $\mathbf{P}$ is small, the overall cost will be dominated by the computation of $\mathbf{w}_n$, which requires a single multiplication with $\mathbf{A}(\mathbf{I}-\mathbf{P})$ per time-step $\Delta t$. 
All further matrix-vector multiplications with $\mathbf{A P}$ only involve those unknowns inside, or immediately next to, the refined region.
Inside $D_{\coar}$, away from the coarse/fine interface, the algorithm reduces to the standard LF method with time-step $\Delta t$, regardless of $p$ or $\nu$. 
This is especially the case for $\mathbf{P}=0$, that is without any local time-stepping.
For $\nu=0$, \cref{AlgStabLTS} coincides with the original one from \cite{DiazGrote09}, since $\delta_{p,\nu} = 1$, and therefore $\beta_k = \beta_{k+1/2} = 1$ and $\omega_{p,\nu} = 2 p^2$.

In the following, we wish to repeat the computational cost analysis for MLMC combined with standard LF on locally refined meshes from the previous section, but this time for the (stabilized) LF-LTS method.
On any level $\ell$, solving a single wave equation will require approximately
\[
\frac{T}{\Delta t_\ell} \simeq \frac{T k^2}{H_\ell}
\]
time-steps of size $\Delta t_\ell$.
For each of these time-steps, the computational cost is dominated by $p_\ell \simeq H_\ell / h^{\operatorname*{f}}$ operations of the type ``$\mathbf{A P v}$'' and one operation ``$\mathbf{A}(\mathbf{I - P})\mathbf{v}$'', which only affect the $n^{\operatorname*{f}}$ or $n_\ell^{\operatorname*{c}}$ unknowns in the fine or coarse part of the domain, respectively.
While again ignoring constants, it follows from (\ref{eq:ndofs}) that the cost of solving \emph{one} wave equation on level $\ell$ for a particular $\omega\in\Omega$ with LF-LTS approximately equals
\begin{equation}
\mathcal{C}_{\LTSLF}\left[u_\ell(\omega)\right] \simeq
\frac{T k^2}{H_\ell} \cdot \left(\frac{k^{2 d}}{H_\ell^d} \left((1-r)+r p_\ell^{d+1}\right)\right) =
\frac{T k^{2 (d+1)}}{H_\ell^{d+1}} \left( (1-r)+r p_\ell^{d+1} \right).
\label{eq:cost_waveeq_LTS}
\end{equation}
As before, this allows us to estimate the cost of computing \emph{one} sample $\Delta Q_{{\ell}}^{(i,\ell)}$ using LF based LTS for any $\ell$,
where, again, we assume that the cost for generating the random field $c$ in the quadrature nodes is bounded by $\mathcal{O}\left(k^{2d} H_\ell^{-d}\right)$ and thus negligible with respect to the overall costs for computing the solution.
For $\ell = 0$, this results simply from inserting $\ell = 0$ in (\ref{eq:cost_waveeq_LTS}). For $\ell \geq 1$, we receive by similar arguments as before,
\begin{align}
C_\ell^{\LTSLF} &= \mathcal{C}_{\LTSLF}\left[u_\ell\left(\omega^{(i,\ell)}\right)\right] + \mathcal{C}_{\LTSLF}\left[u_{\ell-1}\left(\omega^{(i,\ell)}\right)\right] \nonumber\\
&\simeq \frac{T k^{2(d+1)}}{H_0^{d+1}} \left( \left(1-r\right) \frac{2^{d+1}+1}{2^{d+1}} 2^{(d+1)\ell} + r \cdot 2 p_0^{d+1} \right).
\label{eq:C_l_LTS}
\end{align}
With (\ref{eq:optimal_Nl}), this leads to a total computational cost estimate of
\begin{align}
\mathcal{C}_{\LTSLF} &= \sum\limits_{\ell = 0}^{L} N_\ell C_\ell^{\LTSLF} \simeq \frac{2}{\varepsilon^2} \left( \sum\limits_{\ell = 0}^{L} \sqrt{V_\ell C_\ell^{\LTSLF}} \right)^2, \nonumber\\
&\simeq \frac{2\, T k^{2(d+1)}}{\varepsilon^2 H_0^{d+1}} \left[ \sqrt{\left( (1-r)+r p_0^{d+1} \right) V_0} \right. \nonumber\\ 
&\qquad\qquad \left. + \sum\limits_{\ell = 1}^{L} \sqrt{\left( \left(1-r\right) \frac{2^{d+1}+1}{2^{d+1}} 2^{(d+1)\ell} + r \cdot 2 p_0^{d+1} \right) V_\ell} \right]^2.
\label{eq:totalcost_MLMCLTS}
\end{align}

\subsection{Standard vs. local time-stepping: a cost comparison}
\label{sec:Qeff}
Here, we estimate the increase in computational cost for MLMC with standard LF time-stepping over the (stabilized) LF-LTS.
For this, we compute the theoretical speed-up $\mathcal{S}$,
\begin{equation}
\mathcal{S}\left(d,r,p_0,L,\left\{ V_\ell \right\}\right) =  \frac{\mathcal{C}_{\LF}\left[\widehat{Q}^{\ML}_h\right]}{\mathcal{C}_{\LTSLF}\left[\widehat{Q}^{\ML}_h\right]},
\label{eq:Q_eff}
\end{equation}
where $\mathcal{C}_{\LF}$ denotes the total computational cost for computing the MLMC estimate to the solution of (\ref{eq:Wave_hd}) with the second-order LF method (\ref{eq:LF2scheme}) and $\mathcal{C}_{\LTSLF}$ the cost for computing it with the LF-LTS scheme (\cref{AlgStabLTS}).
In particular, we study the effects of various parameters on $\mathcal{S}$, such as 
the relative volume $r$ of the locally refined region, $D_{\fine}$, with respect to the entire computational domain $D$.
The quotient of (\ref{eq:totalcost_MLMCLTS}) over (\ref{eq:totalcost_MLMCLF}) yields the following proposition.

\begin{proposition}
The theoretical speed-up $\mathcal{S}$ in (\ref{eq:Q_eff}) of MLMC combined with LF-LTS, $\mathcal{C}_{\LTSLF}$, over MLMC with standard LF, $\mathcal{C}_{\LF}$, is given by  
\begin{align}
&\mathcal{S}\left(d,r,p_0,L,\left\{ V_\ell \right\}\right) =  \frac{\mathcal{C}_{\LF}}{\mathcal{C}_{\LTSLF}} \nonumber\\
&\simeq \left( \frac
{\sqrt{\left( (1-r)\, p_0+r\, p_0^{d+1} \right) V_0} + \sum\limits_{\ell = 1}^{L} \sqrt{V_\ell \left( r \cdot 2 p_0^{d+1} + \left(1-r\right) 2^{(d+1)\ell} \cdot \frac{2^d+1}{2^{d+\ell}} p_0 \right) }} 
{\sqrt{\left( (1-r)+r p_0^{d+1} \right) V_0} + \sum\limits_{\ell = 1}^{L} \sqrt{V_\ell \left( r \cdot 2 p_0^{d+1} + \left(1-r\right) 2^{(d+1)\ell} \cdot \frac{2^{d+1}+1}{2^{d+1}} \right) }}
\right)^2.
\label{eq:fullQ_eff}
\end{align}
\label{prop:Qeff}
\end{proposition}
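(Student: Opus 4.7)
The plan is to obtain the stated expression for $\mathcal{S}$ by directly forming the quotient of the two total cost estimates already derived in the previous two subsections, namely \eqref{eq:totalcost_MLMCLF} for $\mathcal{C}_{\LF}$ and \eqref{eq:totalcost_MLMCLTS} for $\mathcal{C}_{\LTSLF}$. Both estimates share the common prefactor $\frac{2\,T k^{2(d+1)}}{\varepsilon^{2}H_{0}^{d+1}}$, so this factor cancels in the ratio and only the square of the sums over the level-wise contributions remains.

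First, I would write out the numerator and the denominator of $\mathcal{S}$ using \eqref{eq:totalcost_MLMCLF} and \eqref{eq:totalcost_MLMCLTS}, respectively, keeping the $\ell=0$ term separated from the sum over $\ell\geq 1$ in both cases. The $\ell=0$ contribution is read off directly: for LF it is $\sqrt{((1-r)p_{0}+r p_{0}^{d+1})V_{0}}$ and for LF-LTS it is $\sqrt{((1-r)+r p_{0}^{d+1})V_{0}}$, already in the form required by \eqref{eq:fullQ_eff}.

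Next, for the terms with $\ell\geq 1$ in the numerator, I would rewrite the LF coefficient $(1-r)\frac{2^{d}+1}{2^{d}}2^{d\ell}p_{0}$ by inserting a compensating factor $2^{\ell}/2^{\ell}$,
\begin{equation*}
(1-r)\,\frac{2^{d}+1}{2^{d}}\,2^{d\ell}p_{0}
=(1-r)\,\frac{2^{d}+1}{2^{d+\ell}}\,2^{(d+1)\ell}\,p_{0},
\end{equation*}
so that the coefficient appears in exactly the form displayed in \eqref{eq:fullQ_eff}. The $r\cdot 2 p_{0}^{d+1}$ part is already identical in LF and LF-LTS for $\ell\geq 1$. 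The LF-LTS coefficient in the denominator requires no manipulation at all, matching \eqref{eq:C_l_LTS} verbatim. Finally, squaring the resulting sums and taking the ratio gives \eqref{eq:fullQ_eff}.

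Honestly, the only ``hard'' step is careful bookkeeping of the powers of two and of the factors of $p_{0}$ when rearranging the LF level-wise coefficient, since the cost expressions \eqref{eq:C_l_LF} and \eqref{eq:C_l_LTS} already carry the full content of the argument. In particular, no new analytical input beyond what is summarized in Sections~\ref{sec:localref} and \ref{sec:LTS} is needed: the proposition is a direct algebraic consequence of the two cost estimates combined with the optimal-sample formula \eqref{eq:optimal_Nl}, which has already been used to derive both \eqref{eq:totalcost_MLMCLF} and \eqref{eq:totalcost_MLMCLTS}.
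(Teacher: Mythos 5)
Your proposal is correct and coincides with the paper's own argument: the paper likewise obtains \eqref{eq:fullQ_eff} simply by taking the quotient of \eqref{eq:totalcost_MLMCLF} and \eqref{eq:totalcost_MLMCLTS}, cancelling the common prefactor $\tfrac{2\,Tk^{2(d+1)}}{\varepsilon^{2}H_0^{d+1}}$, and your rewriting $(1-r)\tfrac{2^{d}+1}{2^{d}}2^{d\ell}p_0=(1-r)\tfrac{2^{d}+1}{2^{d+\ell}}2^{(d+1)\ell}p_0$ is exactly the bookkeeping needed to match the displayed form. No gaps.
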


\begin{remark}
\cref{prop:Qeff} also holds for LTS schemes based on other explicit methods, such as the fourth-order modified equation approach \cite{DiazGrote09}, Runge Kutta schemes \cite{GroteMehlinMitkova15} or Adams-Bashforth methods \cite{GroteMitkova13}.
Here, for simplicity, we have assumed the variances $\{V_\ell\}_{\ell = 0,1,\ldots,L}$ to be equal for both time integration methods, with or without LTS; in practice, this may not be true -- see also Section \ref{sec:Num_2d}. 
\label{rem:Vell}
\end{remark}

The speed-up $\mathcal{S}$ derived in \cref{prop:Qeff} calls for a more detailed interpretation.
In doing so, we restrict ourselves to the case where the mesh on the finest level $L$ is (quasi-)uniform. 
More precisely, we assume that $H_{L-1} / h^{\operatorname*{f}} \in (1,2)$ and refine the mesh in both $D_{\coar}$ and $D_{\fine}$ such that the resulting mesh on level $L$ is (quasi-)uniform with $p_L = 1$.
Then, the number of MLMC levels is simply given by $L = \left\lceil \log_2 p_0 \right\rceil$, so that $\mathcal{S}$ in (\ref{eq:fullQ_eff}) only depends on the four parameters $d$, $r$, $p_0$ and $\beta$, where $V_\ell = V_0 / 2^{\beta\ell}$. 
Hence, local time-stepping only occurs on the coarser levels $0,1,\ldots,L-1$.
Clearly, an even greater speed-up might result from a mesh locally refined even on the finest level $L$.

In \cref{fig:Q_eff_1d,fig:Q_eff_2d,fig:Q_eff_3d}, the speed-up $\mathcal{S}$ is shown as a function of the single parameters $r$ or $p_0$, while keeping all other parameters fixed, as in \cref{tab:Q_eff}.
We observe that the speed-up rapidly increases with decreasing relative volume of the refined part $r$, until it reaches the maximal speed-up of LTS over standard LF on the coarsest level $\ell=0$.
For fixed $r$, the maximal speed-up occurs for $10\leq p_0 \leq 30$, but decreases again for larger $p_0$.
At first glance, it might seem counterintuitive that $\mathcal{S}$ decreases for higher values of $p_0$.
However, as the ratio of degrees of freedom in the ``fine'' part over those in the ``coarse'' part further increases with $p_0$,
the cost for every local time-step relative to the overall cost of one global time-step also increases, which results in LTS being less efficient.
In fact, even for a single (deterministic) forward solve, the speed-up of LTS with $p$ local time-steps over standard time integration, given by the ratio of \eqref{eq:cost_waveeq_LF} over \eqref{eq:cost_waveeq_LTS}, is also maximal for the same range of $p$, as shown on the left of \cref{fig:Qeff_beta}.
\begin{table}
\caption{Fixed parameter values used in \cref{fig:Q_eff_1d,fig:Q_eff_2d,fig:Q_eff_3d} with $r$ the relative volume of the refined part, $p_0$ the local refinement factor on the coarsest level and $\beta$ the variance convergence rate, i.e. $V_\ell = V_0 / 2^{\beta\ell}$.}
\begin{center}
\begin{tabular}{c||c|c|c}
$d$ & $r$ & $p_0$ & $\beta$ \\
\hline
$1$ & $10^{-2}$ & $13$ & $4$ \\
$2$ & $10^{-4}$ & $19$ & $6$ \\
$3$ & $10^{-6}$ & $27$ & $8$
\end{tabular}
\end{center}
\label{tab:Q_eff}
\end{table}
\begin{figure}
\centering
\includegraphics[width=0.45\textwidth]{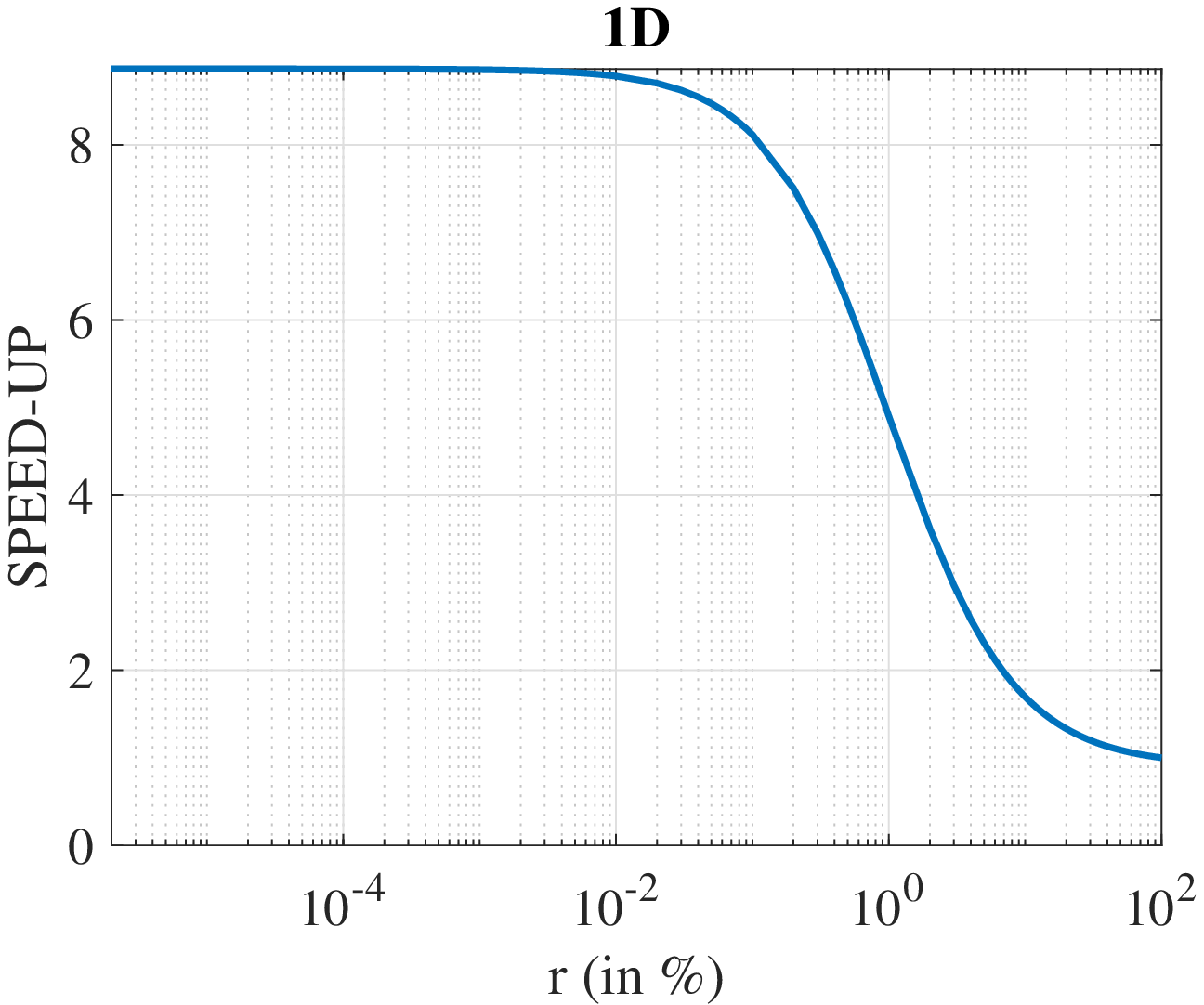}
\includegraphics[width=0.45\textwidth]{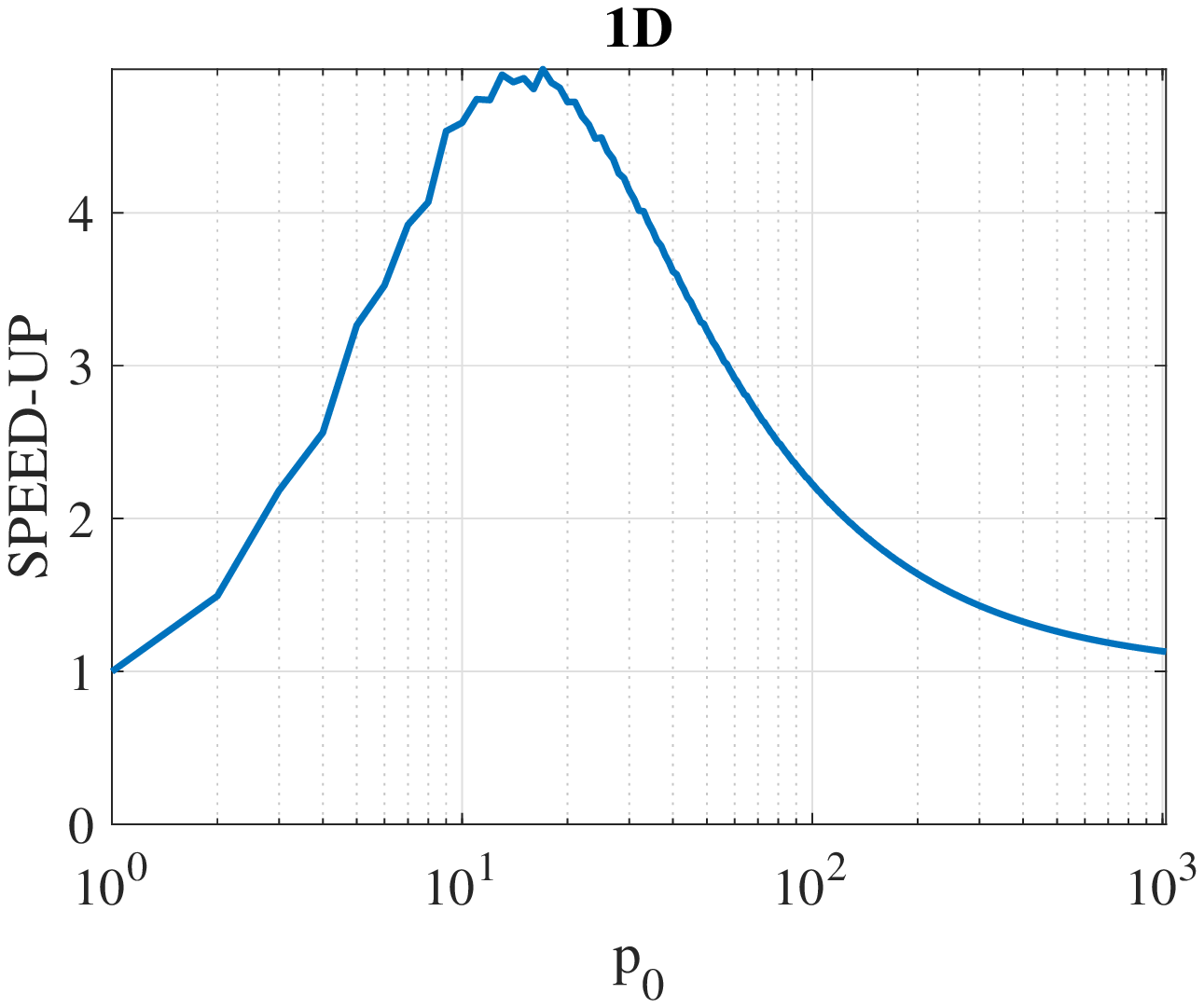} 
\caption{Theoretical speed-up $\mathcal{S}$ in 1D vs. relative volume $r$ of refined part (left) and  local refinement factor $p_0 = H_0 / h^{\operatorname*{f}}$ on the coarsest level (right) with other parameters fixed as in \cref{tab:Q_eff}.}
\label{fig:Q_eff_1d}
\end{figure}
\begin{figure}
\centering
\includegraphics[width=0.45\textwidth]{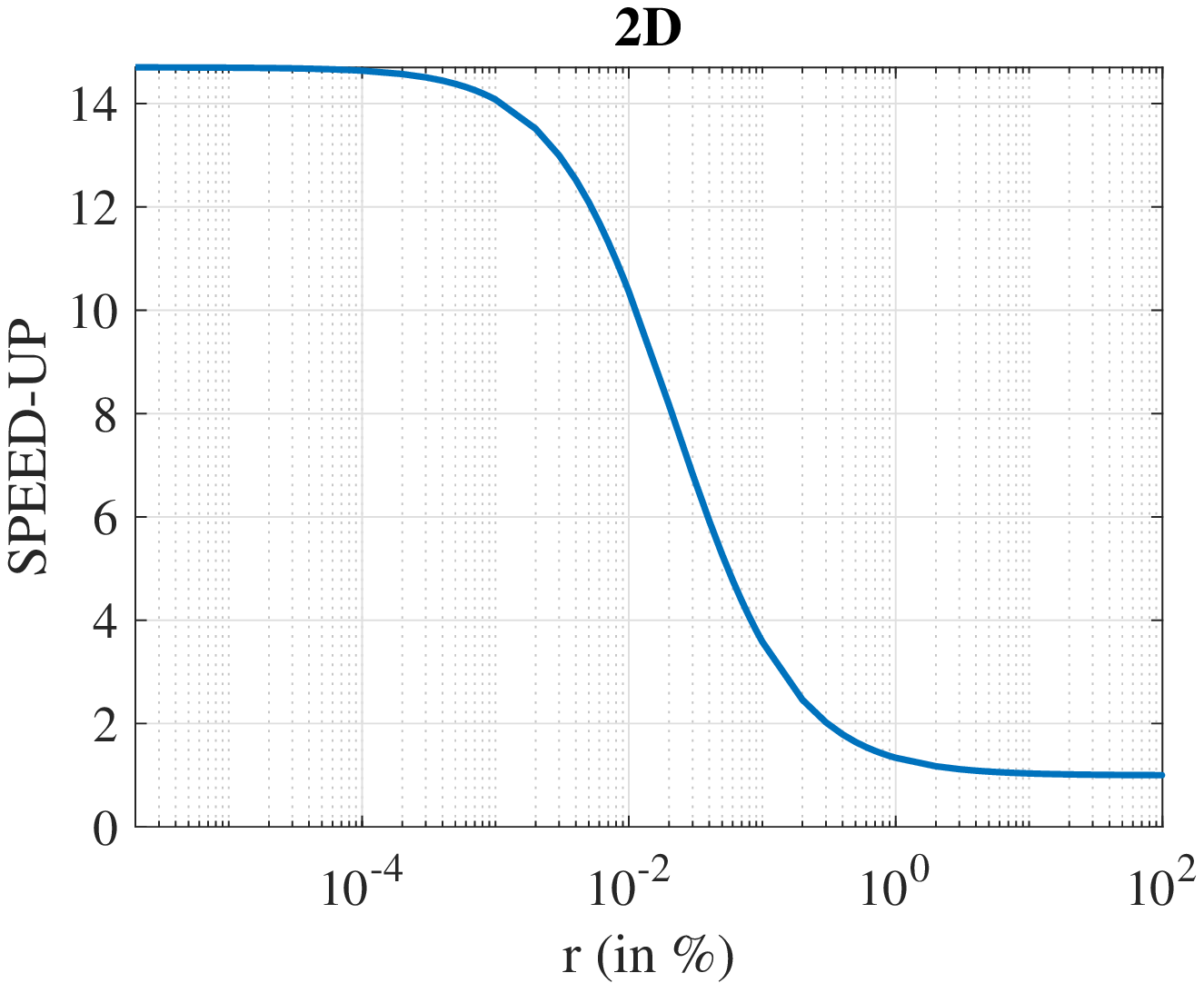}
\includegraphics[width=0.45\textwidth]{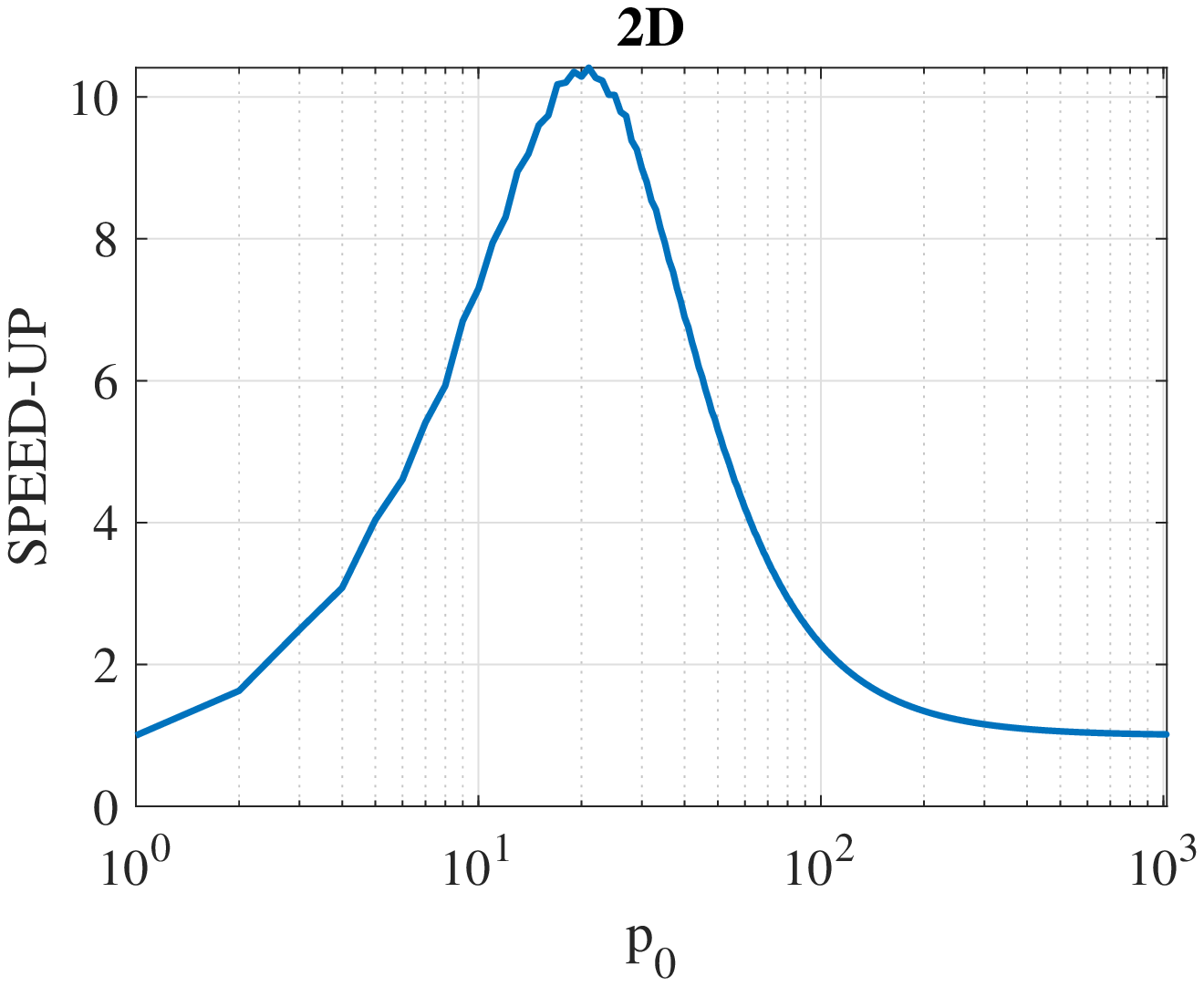} 
\caption{Theoretical speed-up $\mathcal{S}$ in 2D vs. relative volume $r$ of refined part (left) and  local refinement factor $p_0 = H_0 / h^{\operatorname*{f}}$ on the coarsest level (right) with other parameters fixed as in \cref{tab:Q_eff}.}
\label{fig:Q_eff_2d}
\end{figure}
\begin{figure}
\centering
\includegraphics[width=0.45\textwidth]{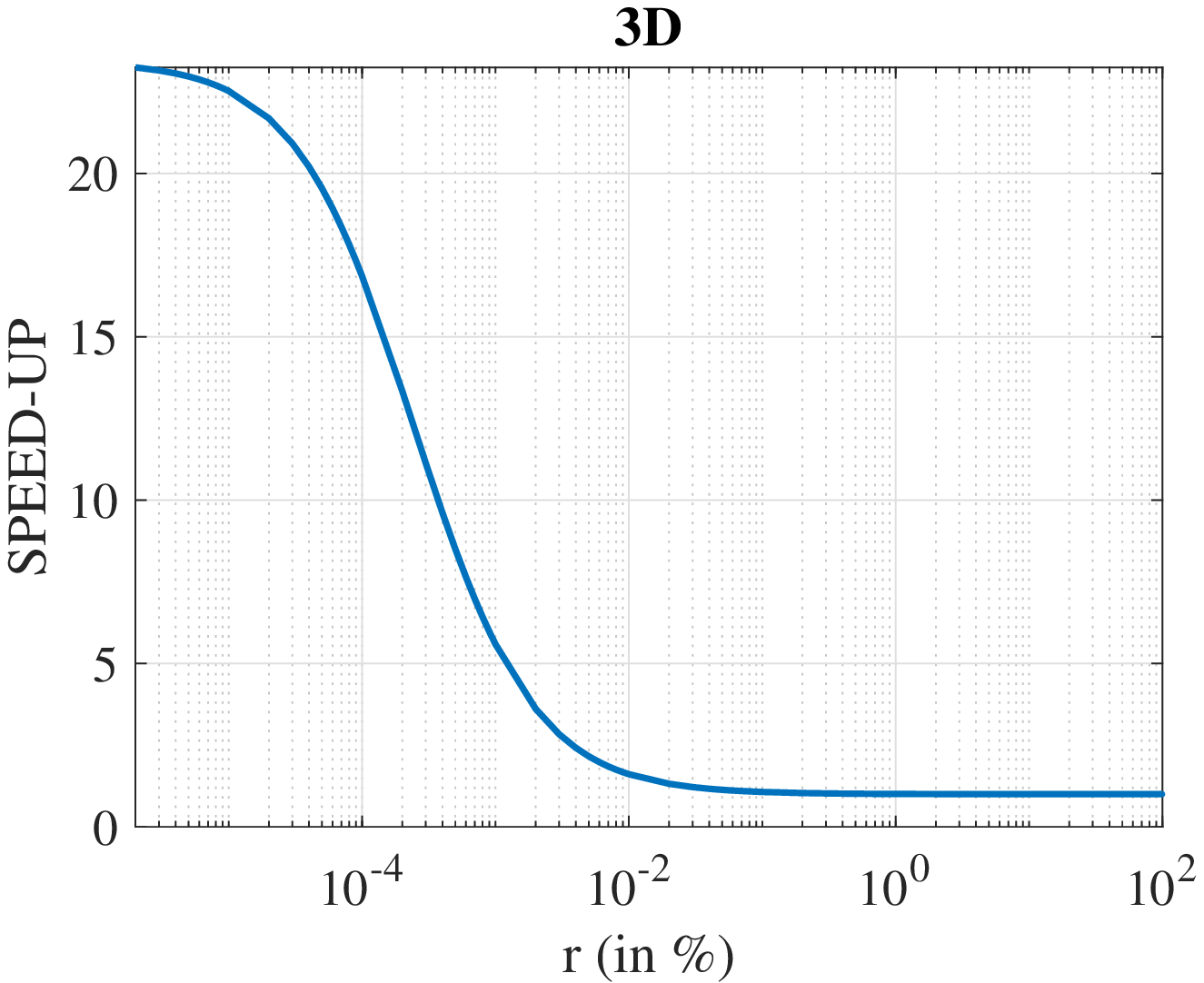}
\includegraphics[width=0.45\textwidth]{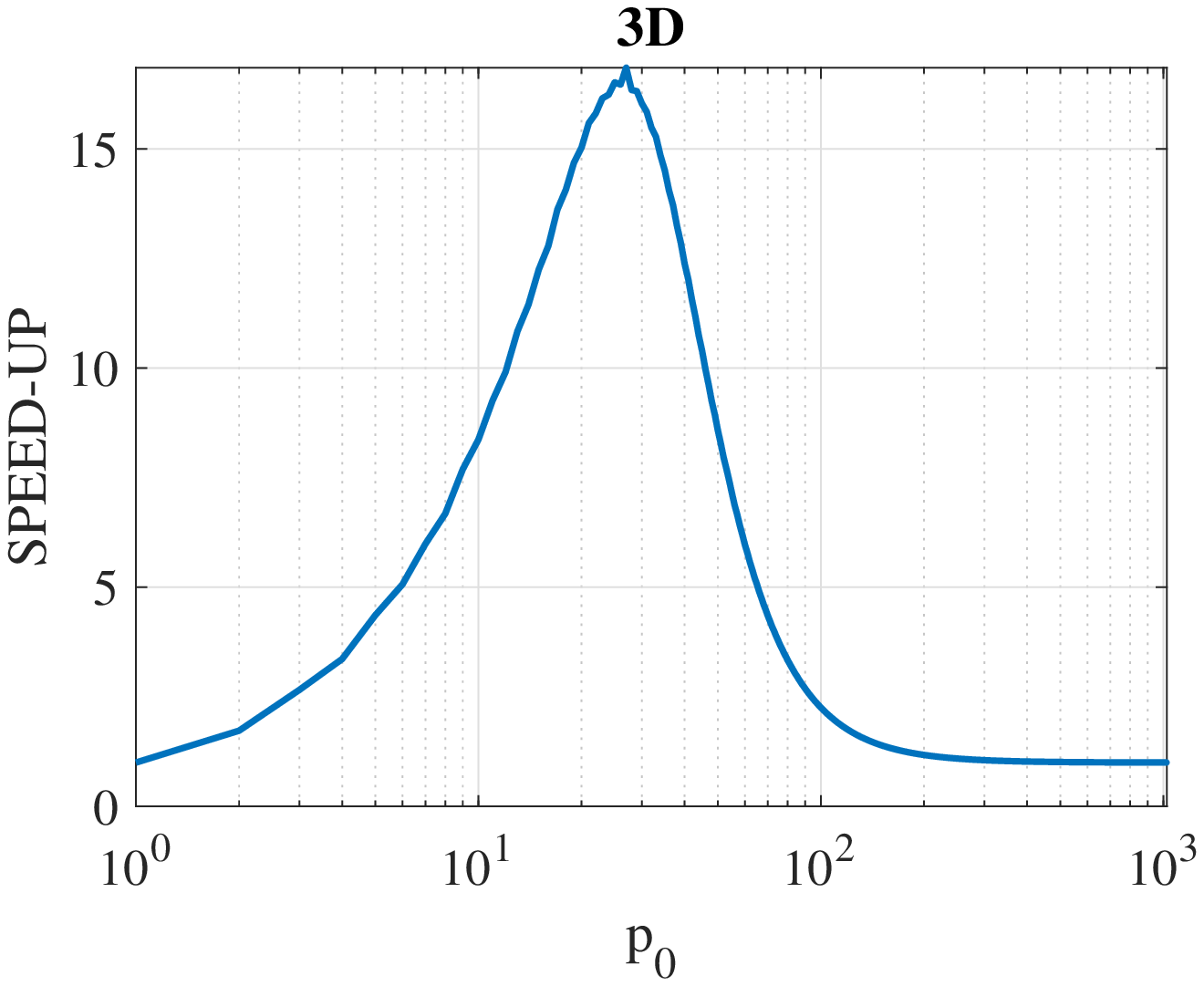} 
\caption{Theoretical speed-up $\mathcal{S}$ in 3D vs. relative volume $r$ of refined part (left) and  local refinement factor $p_0 = H_0 / h^{\operatorname*{f}}$ on the coarsest level (right) with other parameters fixed as in \cref{tab:Q_eff}.}
\label{fig:Q_eff_3d}
\end{figure}

In the right frame of \cref{fig:Qeff_beta}, we see that the performance of MLMC is improved most by LTS at higher convergence rates $\beta$ for the variance $V_\ell = V_0 / 2^{\beta\ell}$.
Indeed, the larger $\beta$, the more samples are computed on the coarsest levels, where the benefit of using LTS is the greatest.
\begin{figure}
\centering
\includegraphics[width=0.49\textwidth]{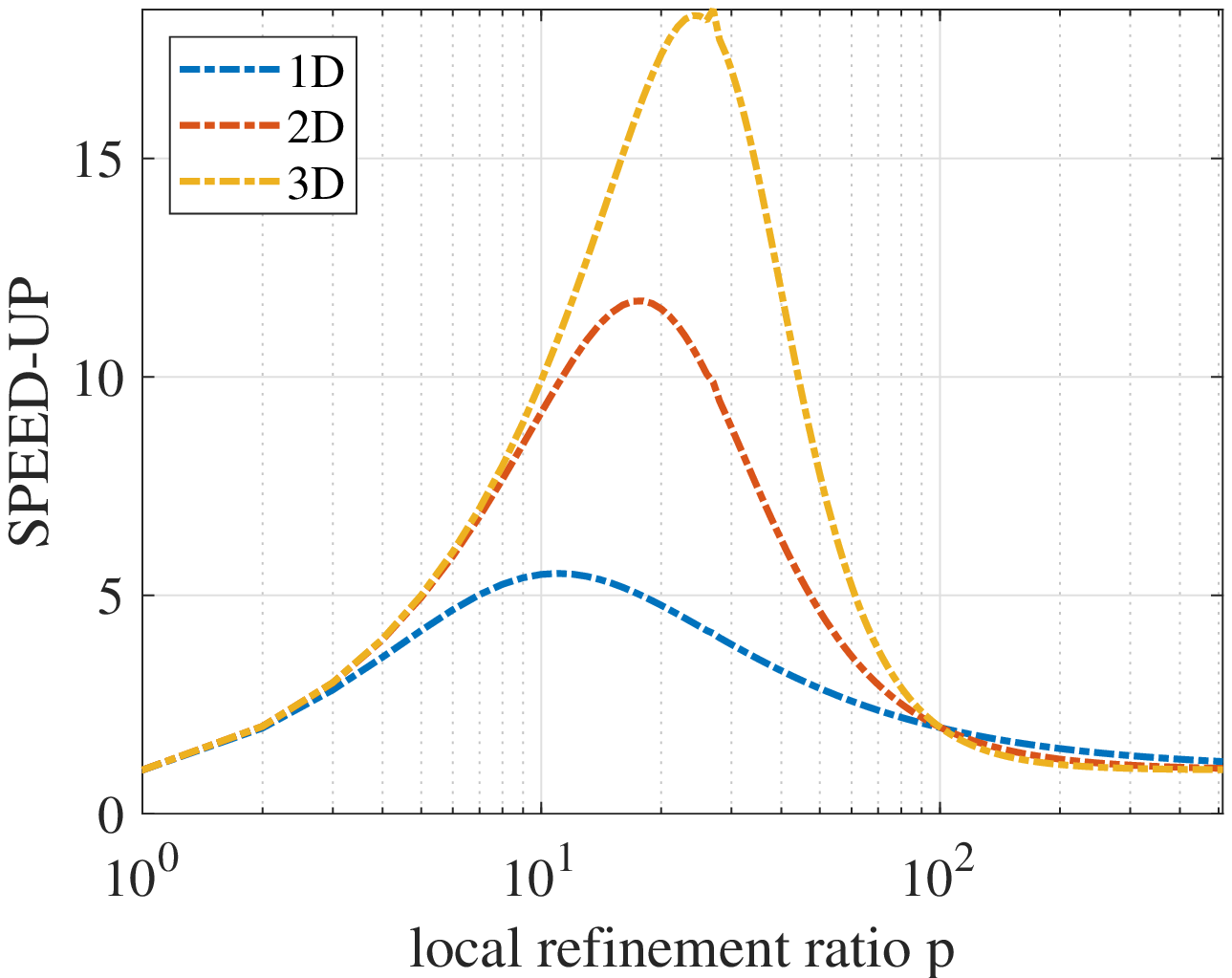}
\includegraphics[width=0.49\textwidth]{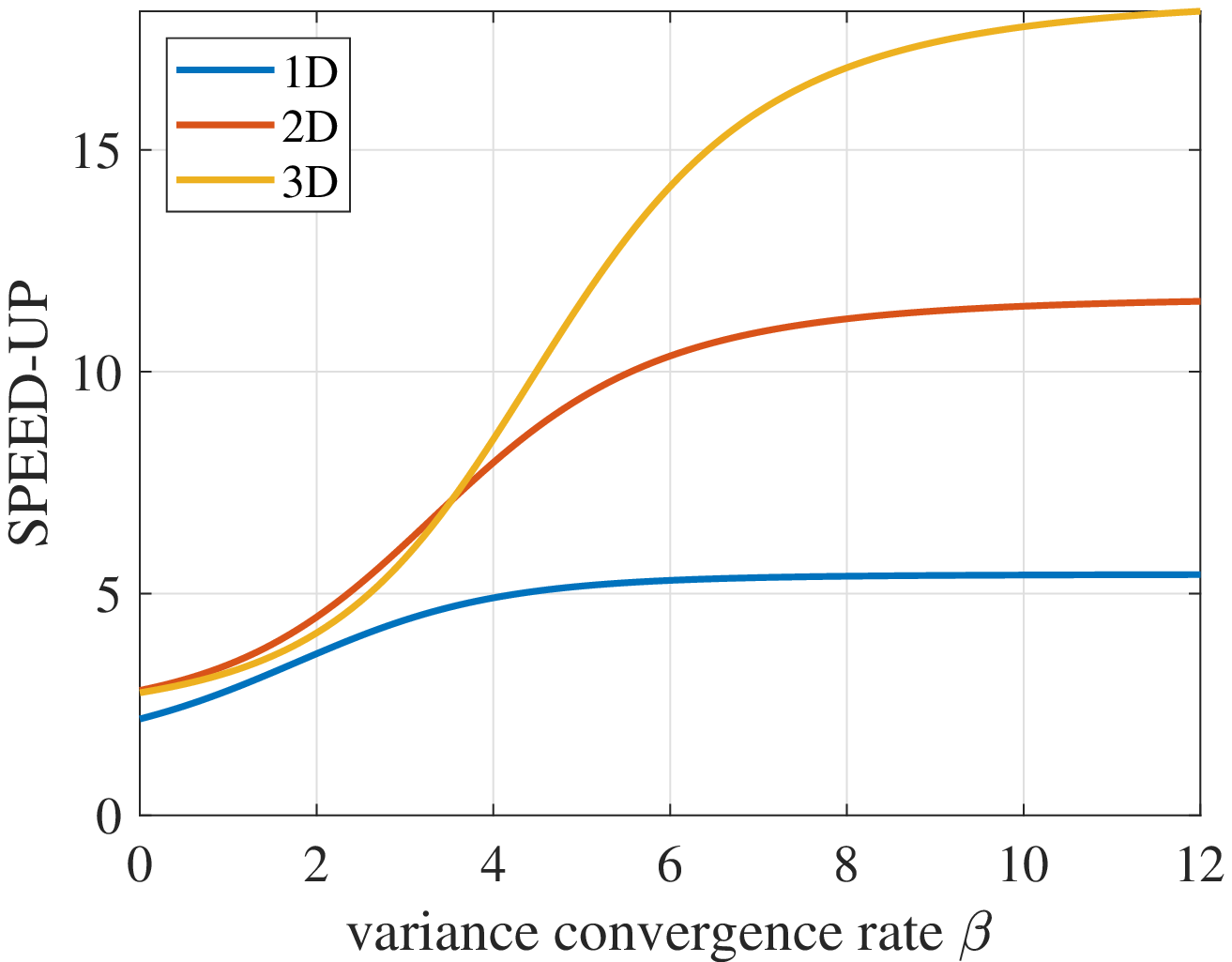}
\caption{Left: Theoretical speed-up for a single (deterministic) solve given by the ratio of \eqref{eq:cost_waveeq_LF} over \eqref{eq:cost_waveeq_LTS} relative to the number of local time-steps $p$ with relative volume of the refined region $r = 1/100^d$. 
Right: Theoretical speed-up $\mathcal{S}$ for MLMC vs. variance convergence rate $\beta$ for $V_\ell = V_0 / 2^{\beta\ell}$ with local refinement factor $p_0$ and relative volume $r$ fixed as in \cref{tab:Q_eff}.}
\label{fig:Qeff_beta}
\end{figure}

\subsection{Graded mesh refinement towards a reentrant corner}
\label{sec:Lshape}

Here, we consider a different type of local mesh refinement due to a graded mesh toward a reentrant corner.
Let $D$ be an L-shaped domain in $\mathbb{R}^2$ with a reentrant corner at $(0.5,0.5)$, shown in \cref{fig:gradedmesh}.
Due to characteristic singularities of the solution at reentrant corners, uniform meshes
generally do not yield optimal convergence rates \cite{MuellerSchwab16}. 
In the elliptic case, a common remedy to restore the accuracy and achieve optimal convergence rates is to use (a-priori) graded meshes toward the reentrant corner with appropriate weighted Sobolev spaces \cite{BabKelPit79}, see, e.g. \cite[Chapt. 3.3.7]{Schwab}.
Optimal convergence rates were also recently proved for a semi-discrete Galerkin formulation of the wave equation \cite{MuellerSchwab15} on graded meshes.

Hence, we  first partition $D$ into six triangles of equal size with a common vertex at the center $(0.5,0.5)$.
Then, on every edge $e$ connected to the center, we allocate $m+1$ points at distance 
\begin{equation*}
|e| \left(\frac{k}{m}\right)^s, \quad k = 0,1,\ldots,m,
\end{equation*}
from it, where $s \geq 1$ is a fixed grading parameter; the larger $s$, the more strongly the triangles will cluster near the reentrant corner, whereas for $s = 1$ the mesh is uniform throughout $D$.
All other vertices within the same $k$-th layer are distributed uniformly, as shown in \cref{fig:gradedmesh} for a graded mesh with $s = 2$ and $m=10$.
\begin{figure}
\centering
\includegraphics[width=0.5\textwidth]{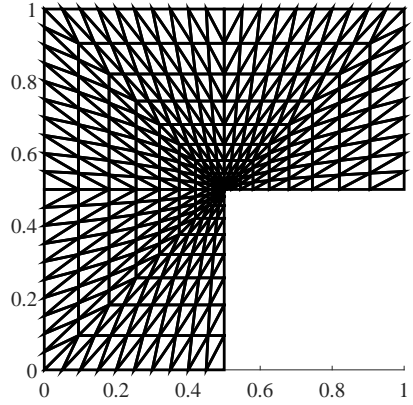}
\caption{Graded mesh on the L-shaped domain $D$ with $m=10$, $s=2$.}
\label{fig:gradedmesh}
\end{figure}
For more details on the construction of these graded meshes, we refer to \cite{MuellerSchwab15} and the references therein.
Furthermore, this strategy can be extended to general dimensions $d=1,2,3$.

By construction, the elements in a graded mesh $\mathcal{T}_{m,s}$ are distributed among $m$ different tiers or layers with equally sized triangles with
\[
h_k \approx \frac{k^s-\left(k-1\right)^s}{2m^s},\quad k=1,\ldots,m.
\]
Hence, the smallest and largest elements of the mesh are approximately of size $h_1 \approx 1/(2 m^s)$ and $h_m \approx (1-(m-1)^s/m^s)/2$, respectively.
The number of elements in the inner $k$ layers scales like $k^d$.

Thus, we can estimate the computational cost for solving the wave equation for one particular sample $\omega \in \Omega$ with any standard explicit time-stepping method by multiplying the number of time-steps with the number of elements. 
Since the time-step must be proportional to $h_1$ for stability, the cost for solving the wave equation on level $\ell$ in the MLMC algorithm with the standard LF method scales as
\begin{equation}
\mathcal{C}_{\LF}\left[u_\ell(\omega)\right] 
\simeq \frac{\mbox{no. of elements in }D}{h_1}
\simeq \frac{m_\ell^d}{h_1} \simeq m_\ell^{s+d},
\label{eq:cost_waveeq_LFgraded}
\end{equation}
where $m_\ell$ denotes the number of layers on level $\ell$, e.g. $m_\ell = m_0 \, 2^\ell$.

To estimate the computational cost for local time-stepping, we first choose the subdomain $D_{\operatorname*{f}}$, where a smaller time-step is used, as the union of the smaller first $q$ layers with $q$ still to be determined.
In $D_{\operatorname*{f}}$, the time-step is then again proportional to the smallest mesh size $h_1$, whereas in $D_{\operatorname*{c}} = 
D
\setminus D_{\operatorname*{f}}$ it is proportional to the smallest element in the outer tiers $q+1,\ldots,m_\ell$ of mesh size $h_{q+1}$.
Hence, the computational cost for solving the wave equation for one particular $\omega \in \Omega$ with an explicit LTS method scales as
\begin{align}
\mathcal{C}_{\LTSLF}\left[u_\ell(\omega)\right] 
&\simeq \frac{\mbox{no. of elements in }D_{\operatorname*{f}}}{h_1} +  \frac{\mbox{no. of elements in }D_{\operatorname*{c}}}{h_{q+1}} \nonumber\\
&\simeq q^d m_\ell^s + \left( m_\ell^d - q^d \right) \frac{m_\ell^s}{\left(q+1\right)^s-q^s} \nonumber\\
&= \frac{m_\ell^{s+d} + m_\ell^s q^d \left(\left(q+1\right)^s-q^s-1\right)}{\left(q+1\right)^s-q^s}.
\label{eq:cost_waveeq_LTSgraded}
\end{align}
The ratio of (\ref{eq:cost_waveeq_LFgraded}) to (\ref{eq:cost_waveeq_LTSgraded}) yields the expected relative speed-up
\begin{equation}
\frac{\mathcal{C}_{\LF}\left[u_\ell(\omega)\right]}{\mathcal{C}_{\LTSLF}\left[u_\ell(\omega)\right]} 
\simeq \frac{m_\ell^d \left( \left(q+1\right)^s-q^s \right)}
{m_\ell^{d} + q^d \left(\left(q+1\right)^s-q^s-1\right)}.
\label{eq:speedup_det_graded}
\end{equation}
To determine the optimal value for $q \in [1,m_\ell]$, which minimizes $\mathcal{C}_{\LTSLF}$, or equivalently maximizes the relative speed-up,
we now set the derivative of \eqref{eq:speedup_det_graded} with respect to $q$ to zero:
\begin{align}
&s \left( \left(q+1\right)^{s-1}-q^{s-1} \right) \left( m_\ell^d + q^d \left( \left(q+1\right)^s-q^s - 1 \right) \right) \nonumber\\
&- \left( \left(q+1\right)^s-q^s \right) q^{d-1} \left( d \left( \left(q+1\right)^s-q^s - 1 \right) + s q \left( \left(q+1\right)^{s-1}-q^{s-1} \right) \right) = 0. \label{eq:ddqS}
\end{align}
Since the left-hand side is positive for small $q$ and negative for larger values such as $q=m_\ell$, there exists an optimal value $q^{\operatorname*{opt}}_\ell$, $1 < q^{\operatorname*{opt}}_\ell < m_\ell$, for $m_\ell \geq 2$ and $s \geq 1$.
Since we do not seek the precise value of $q^{\operatorname*{opt}}_\ell$ but only wish to determine its asymptotic behavior as $m_\ell \to \infty$, we consider \eqref{eq:ddqS} for large $q$.
By using that, for any $\alpha > -1$,
\begin{equation}
(q+1)^\alpha - q^\alpha = \alpha q^{\alpha-1} + \mathcal{O}\left(q^{\alpha-2}\right), \quad q \to \infty,
\label{eq:qstrick}
\end{equation}
 the equation \eqref{eq:ddqS} for $q^{\operatorname*{opt}}_\ell$ reduces to
\begin{equation*}
\left( s-1 \right) m_\ell^d - s\, d\, q^{d+s-1} + \mathcal{O}\left( \frac{m_\ell^d}{q} + q^d + q^{d+s-2} \right) = 0.
\end{equation*}
For $m_\ell, q \to \infty$, the first two terms clearly dominate the other ones, which yields
\begin{equation}
q_\ell^{\operatorname*{opt}} = \mathcal{O}\left( m_\ell^{d/(d+s-1)} \right), \quad m_\ell \to \infty.
\label{eq:LTSgraded_qopt}
\end{equation}
Applying \eqref{eq:qstrick} to \eqref{eq:cost_waveeq_LTSgraded} and inserting \eqref{eq:LTSgraded_qopt} leads to
\begin{equation}
\mathcal{C}_{\LTSLF}\left[u_\ell(\omega)\right] 
\simeq \frac{m_\ell^{d+s}}{s\, q^{s-1}} + m_\ell^s q^d \left(1 - \frac{1}{s\,q^{s-1}} \right)
= \mathcal{O}\left(m_\ell^{s+d^2/(d+s-1)}\right).
\label{eq:cost_waveeq_LTSgraded2}
\end{equation}

\begin{corollary}
Let $\widehat{Q}_h^{\ML}$ be the MLMC estimator to $\mathbb{E}[Q] \in V$, where $Q$ is a Lipschitz map $C^0\left(0,T;L^2\left(D\right)\right) \to V$, $D \in \mathbb{R}^d$.
Assume $u(\cdot,\cdot,\omega)$ in a sufficiently regular weighted Sobolev space, uniformly in $\omega$, and let $Q_\ell = Q\left(u_{H_\ell}\right)$, where $u_{H_\ell}$ is computed using FE space discretization on $s$-graded meshes, $s \geq 1$, as above and explicit time integration schemes of the same order $k$, 
and
\[
\left\| \mathbb{E}\left[Q_{H_\ell} - Q\right] \right\|_{V} \leq \mathcal{O}\left(\left(H_\ell\right)^{k+1} + \left(\Delta t_\ell\right)^{k+1}\right),
\]
where $H_\ell$ is the largest element of the mesh and the time-step $\Delta t_\ell$ fulfills the CFL condition $\Delta t_\ell \simeq H_\ell / k^2$.
Further let $\beta > 0$ be a constant, such that 
\(
V_\ell \leq \mathcal{O}\left(\left(H_\ell\right)^\beta\right)
\)
and $2(k+1) \geq \min\{\beta, s+d^2/(d+s-1)\}$,
and assume that the cost for evaluating the random field $c$ at the quadrature nodes is bounded by $\mathcal{O}\left(k^{2d} H_\ell^{-d}\right)$. 

Then for any $\varepsilon$ small enough, there exist a total number of levels $L$ and a number of samples $N_\ell$, $\ell=0,\ldots,L$, such that the root mean square error $e(\widehat{Q}^{\ML}_h)$ is bounded by $\varepsilon$.

If standard time-stepping is used, the total cost for $\varepsilon \to 0$ behaves like
\begin{equation}
\mathcal{C}\left[\widehat{Q}_h^{\ML}\right] \leq \mathcal{O}\left(
\left\{
\begin{aligned}
&\varepsilon^{-2}, & & \beta > s+d \\
&\varepsilon^{-2}(\log \varepsilon)^2, & & \beta = s+d \\
&\varepsilon^{-2-\frac{s+d-\beta}{k+1}}, & & \beta < s+d \\
\end{aligned}
\right.
\right),
\label{eq:MLMCLF_totalcost_thm_graded}
\end{equation}
whereas if LTS is used, the total cost behaves like
\begin{equation}
\mathcal{C}\left[\widehat{Q}_h^{\ML}\right] \leq \mathcal{O}\left(
\left\{
\begin{aligned}
&\varepsilon^{-2}, & & \beta > s+d^2/(d+s-1) \\
&\varepsilon^{-2}(\log \varepsilon)^2, & & \beta = s+d^2/(d+s-1) \\
&\varepsilon^{-2-\frac{s+d^2/(d+s-1)-\beta}{k+1}}, & & \beta < s+d^2/(d+s-1) \\
\end{aligned}
\right.
\right).
\label{eq:MLMCLTS_totalcost_thm_graded}
\end{equation}
\label{cor:totalcostLFgraded}
\end{corollary}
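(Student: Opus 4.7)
The plan is to apply Theorem~\ref{thm:totalcost} twice, once with the LF cost and once with the LTS cost, identifying the triple $(\alpha, \beta, \gamma)$ in each case. The variance exponent $\beta$ is given by hypothesis, so it remains to fix the weak-error exponent $\alpha$ and the per-level cost exponent $\gamma$. The bias exponent is $\alpha = k+1$ for both strategies: since $\Delta t_\ell \simeq H_\ell/k^2$, the two terms in the weak-error bound are both of order $H_\ell^{k+1}$.

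To obtain $\gamma$, I would convert the cost estimates of Section~\ref{sec:Lshape}, which are expressed in terms of the layer count $m_\ell$, into estimates in terms of $H_\ell$ as required by Theorem~\ref{thm:totalcost}. The largest element in the graded mesh $\mathcal{T}_{m_\ell,s}$ satisfies $H_\ell = h_{m_\ell} \simeq s/(2 m_\ell)$ as $m_\ell\to\infty$, so $m_\ell \simeq H_\ell^{-1}$. Inserting this into \eqref{eq:cost_waveeq_LFgraded} and \eqref{eq:cost_waveeq_LTSgraded2} gives
\[
C_\ell^{\LF} \simeq H_\ell^{-(s+d)}, \qquad C_\ell^{\LTSLF} \simeq H_\ell^{-(s+d^2/(d+s-1))},
\]
so $\gamma_{\LF}=s+d$ and $\gamma_{\LTSLF}=s+d^2/(d+s-1)$. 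The $\mathcal{O}(k^{2d}H_\ell^{-d})$ cost of sampling the random field at the quadrature nodes is absorbed in both cases, since a short computation shows $d \leq s+d^2/(d+s-1) \leq s+d$ whenever $s \geq 1$.

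With these parameters fixed, the compatibility condition $\alpha \geq \tfrac{1}{2}\min(\beta,\gamma)$ of Theorem~\ref{thm:totalcost} is exactly the stated hypothesis $2(k+1)\geq \min\{\beta, s+d^2/(d+s-1)\}$ in the LTS case, and it implies the analogous condition for LF in the regime $\beta \leq s+d^2/(d+s-1)$ where the two minima coincide; the remaining LF regime is handled by the same case-by-case summation of $\sum_\ell H_\ell^{(\beta-\gamma)/2}$ that underlies the proof of Theorem~\ref{thm:totalcost}. Substituting $(\alpha,\beta,\gamma_{\LF})$ and $(\alpha,\beta,\gamma_{\LTSLF})$ into \eqref{eq:MLMC_totalcost_thm} then yields \eqref{eq:MLMCLF_totalcost_thm_graded} and \eqref{eq:MLMCLTS_totalcost_thm_graded}, respectively. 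The only non-routine step is the change of variables $m_\ell \leftrightarrow H_\ell$: Theorem~\ref{thm:totalcost} phrases everything in the mesh size, while Section~\ref{sec:Lshape} uses the layer count that scales inversely with the \emph{largest} element rather than with the smallest element near the reentrant corner; once this convention is fixed, the asymptotic improvement $\gamma_{\LF}-\gamma_{\LTSLF} = d(s-1)/(d+s-1) > 0$ for $s>1$ follows by direct substitution into the abstract complexity bound.
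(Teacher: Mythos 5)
Your proposal is correct and follows essentially the same route as the paper, whose proof is a one-line appeal to \cref{thm:totalcost} with $\gamma = s+d$ from \eqref{eq:cost_waveeq_LFgraded} and $\gamma = s+d^2/(d+s-1)$ from \eqref{eq:cost_waveeq_LTSgraded2}. You additionally spell out the conversion $m_\ell \simeq H_\ell^{-1}$ between layer count and largest mesh size, and you verify the side conditions (absorption of the random-field sampling cost and the compatibility condition $\alpha \geq \tfrac12\min(\beta,\gamma)$ for the LF case) that the paper leaves implicit.
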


\begin{proof}
The result follows from similar arguments as for \cref{cor:totalcostLF} by applying \cref{thm:totalcost} with $\gamma = s+d$ for standard time-stepping (\ref{eq:cost_waveeq_LFgraded}) and $\gamma = s + d^2/\left(d+s-1\right)$ for LTS (\ref{eq:cost_waveeq_LTSgraded2}).
\end{proof}

\begin{remark}
The ratio of \eqref{eq:MLMCLF_totalcost_thm_graded} to \eqref{eq:MLMCLTS_totalcost_thm_graded} yields  the theoretical speed-up \eqref{eq:Q_eff} for $\varepsilon \to 0$:
\begin{equation}
\mathcal{S} \leq 
\left\{
\begin{aligned}
&\mathcal{O}\left(1\right), & & \beta > s+d, \\
&\mathcal{O}\left( \varepsilon^{-\frac{s+d-\beta}{k+1}}\right), & & s+d^2/(d+s-1) < \beta < s+d, \\
&\mathcal{O}\left(\varepsilon^{-\frac{d \left(s-1\right)}{\left(k+1\right)\left(s+d-1\right)}}\right), & & \beta < s+d^2/(d+s-1). \\
\end{aligned}
\right.
\label{eq:Qeff_graded}
\end{equation}
For $\beta > s+d$, the total cost is dominated by the computational effort on the coarsest levels for both time-stepping methods; hence, the total cost has the same asymptotic behavior up to a constant factor.
In the two other cases with $\beta < s+d$, however, the speed-up of using LF-LTS over standard LF actually grows as the error tolerance $\varepsilon$ decreases. In other words, the smaller the desired error level  $\varepsilon$, the larger the gain in using local time-stepping in MLMC.
For $\mathcal{P}^1$-elements in 
$d=3$
space dimensions with grading parameter $s=2$, for instance, the speed-up grows like $\mathcal{S} = \mathcal{O}\left( \varepsilon^{-
3/8
} \right)$ if $\beta < 
17/4
$.
\end{remark}

For the above cost estimation, the computational mesh is divided into a "coarse" and a "fine"  region, each associated with a distinct time-step. Since the region of local refinement itself contains 
sub-regions of further refinement, those “very fine” elements yet again will dictate the time-step, 
albeit local, to the entire “fine” region. Then, it would be more efficient to let the time-marching strategy mimic the multilevel hierarchy of the mesh organized into tiers of “coarse”, “fine”, “very fine”, etc. elements by introducing a corresponding hierarchy into the time-stepping method. By using within each tier of equally sized elements the corresponding optimal time-step, the resulting multi-level local time-stepping (MLTS) method \cite{DiazGrote15}
would achieve an even greater speed-up.

\section{Numerical results}
\label{sec:numerics}

To illustrate the efficiency of the combined LF-LTS-MLMC algorithm, we now apply it in three distinct situations with random wave speed.

\subsection{Continuous random wave speed}
\label{sec:Num_1d_smooth}

First, we consider \eqref{eq:Wave_hd} in $D = (0,6)$ with $f \equiv 0$, $u_0(x) = \operatorname*{e}^{-(x-3)^2 / 0.09}$, $v_0(x) \equiv 0$ and homogeneous Neumann boundary conditions.
The random wave speed $c^2$ is given by the Karhunen-Lo\`{e}ve expansion,
\begin{equation*}
c^2(x,\omega) = 1 + \sum\limits_{k=1}^{100} \frac{1}{4 \pi^2 k^2} \left(\cos\left(\frac{k \pi x}{6}\right)\xi_k^{(1)}(\omega) +\sin\left(\frac{k \pi x}{6}\right)\xi_k^{(2)}(\omega)\right) ,
\end{equation*}
where $\xi_k^{(1)}, \,\xi_k^{(2)} \sim U(-1,1)$ are i.i.d. uniform random variables. 
In \cref{fig:c2real_cossin}, we display different realizations of $c^2$ for different random samples on different levels 
with $L=4$.
\begin{figure}
\centering
\includegraphics[width=\textwidth]{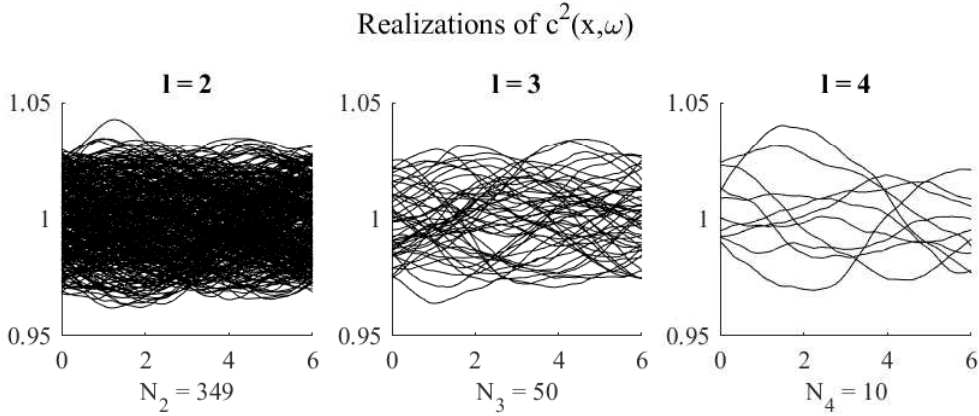}
\caption{Continuous random wave speed. Realizations of the random wave speed $c(x,\omega^{(i,\ell)})$ on levels $\ell = 2,3,4$ 
with $L=4$.
}
\label{fig:c2real_cossin}
\end{figure}

For discretization in space, we choose continuous, piecewise linear $\mathcal{P}^1$ \FE s on meshes of size $H_\ell = 2^{-(\ell+4)}$ for $\ell = 0,1,2,\ldots$.
For time discretization, we use the (stabilized) LF-LTS scheme listed in \cref{AlgStabLTS}.
Here, we arbitrarily set the locally refined part of the mesh to $D_{\fine} = [5-H_0,5]$ with elements of size $h^{\operatorname*{f}} = 2^{-8}$.
In \cref{fig:samplesol_c2cossin}, the respective FE-solu\-tions $u_{\ell}(x,T,\omega^{(i)})$ of (\ref{eq:Wave_hd}) at $T = 11$ on level $\ell = 2$ are shown for 10 particular samples $c(x,\omega^{(i)})$.
\begin{figure}
\centering
\includegraphics[width=0.65\textwidth]{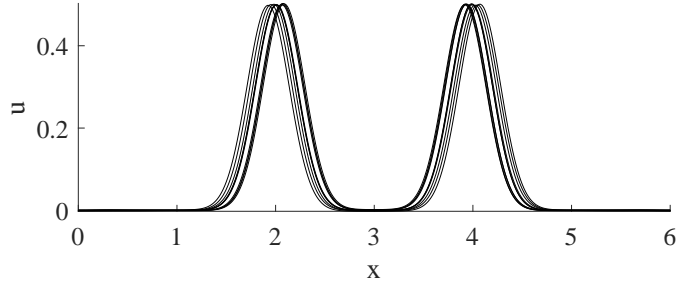}
\caption{Continuous random wave speed. Overlay of solutions $u(x,t,\omega)$ at time $T = 11$ for different random samples of smooth wave speed $c^2(\omega)$.}
\label{fig:samplesol_c2cossin}
\end{figure}

Next, we apply the MLMC \cref{alg:MLMCGiles} to estimate  the expected value of the mean solution at a fixed time $T>0$, $\mathbb{E}[u(\cdot,T,\cdot)] \in V = L^2(D)$.
In \cref{fig:worktol_cossin}, we compare the computational cost of MLMC with LF-LTS or with standard LF relative to the root mean square error $\varepsilon$ according to \eqref{eq:MSE_splitform}, where the total variance and bias term are estimated using \eqref{eq:Vell_est} and \eqref{eq:bias_est} with $\alpha=2$, respectively. 
Here, the computational cost is either measured via CPU time or estimated using
\[
\mathcal{C}\left[\widehat{Q}^{\ML}_h\right] = \sum_{\ell=0}^L N_\ell C_\ell,
\]
where $N_\ell$ 
and $L$ are
determined on the fly by the MLMC algorithm and $C_\ell$ is estimated from \eqref{eq:cost_waveeq_LF} for standard LF or \eqref{eq:cost_waveeq_LTS} for LF-LTS.
As expected from \cref{thm:totalcost}, the costs of both algorithms behave inversely proportional to $\varepsilon^2$. 
Nonetheless, the MLMC algorithm using LTS is about one order of magnitude cheaper than that using standard LF.

\begin{figure}
\centering
\includegraphics[width=0.49\textwidth]{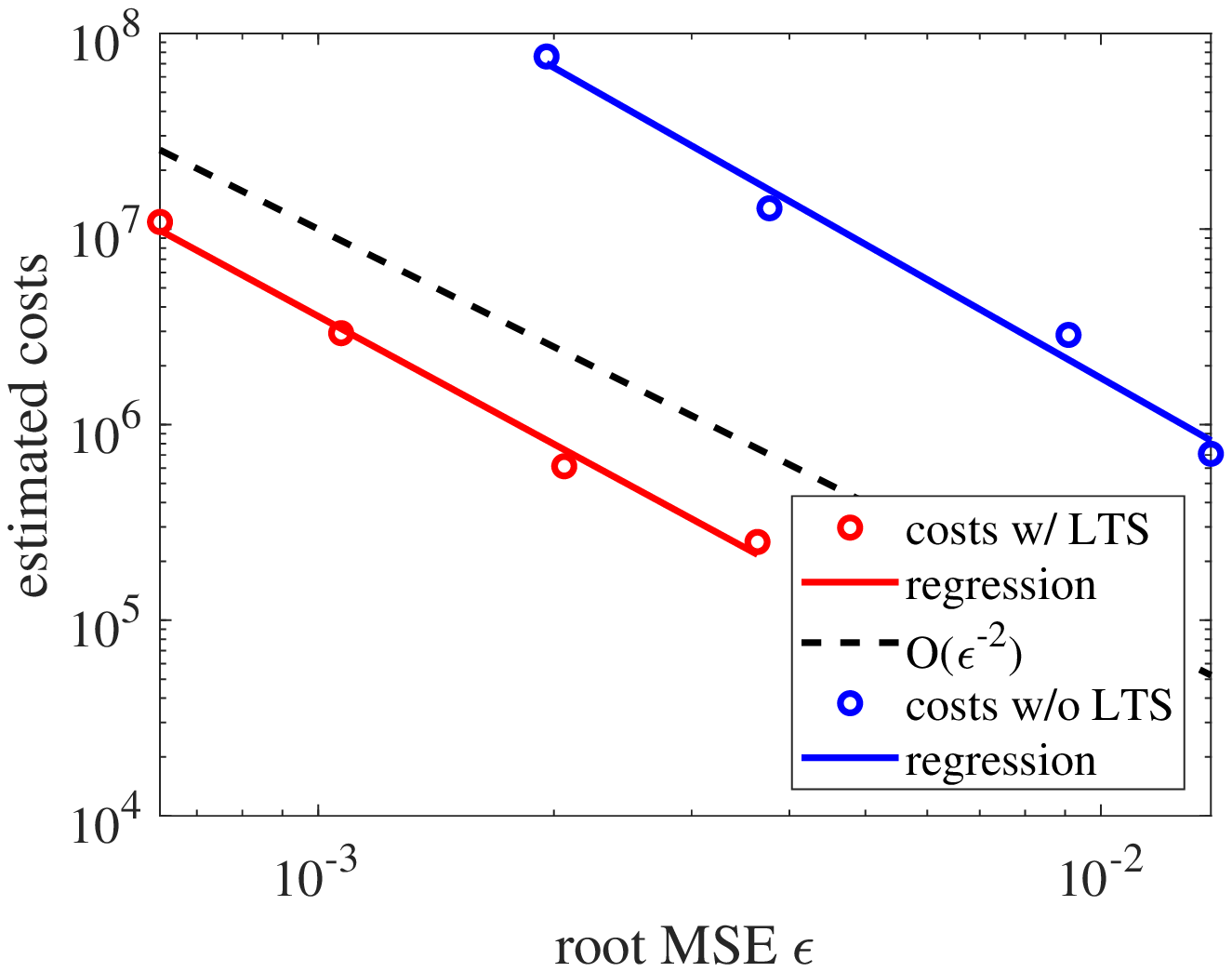}
\includegraphics[width=0.49\textwidth]{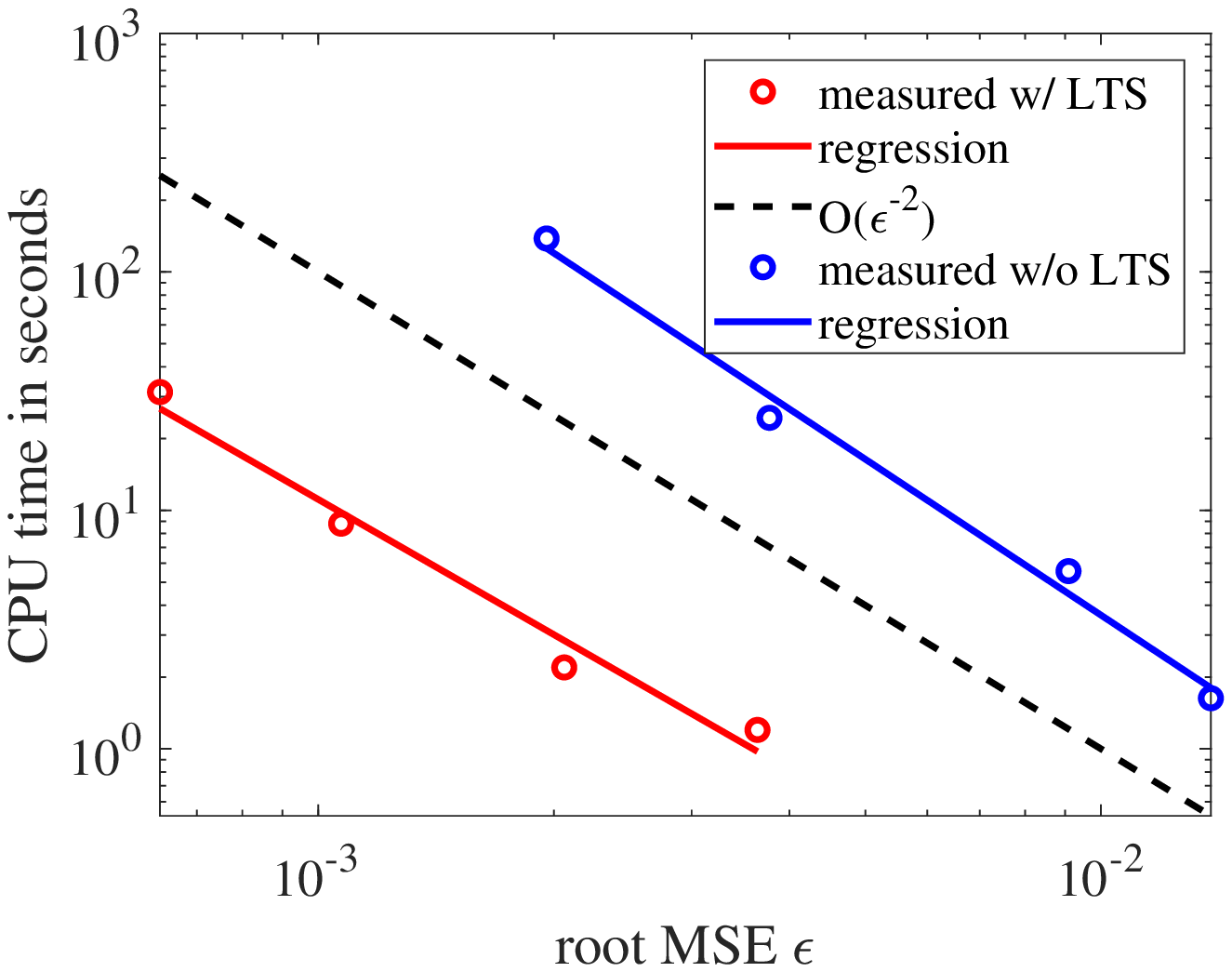}
\caption{Continuous random wave speed. Work to error ratio for the MLMC-FE leapfrog method with, or without, LTS. Left: pre-estimated costs, right: measured CPU times.}
\label{fig:worktol_cossin}
\end{figure}

\subsection{Discontinuous random wave speed}
\label{sec:Num_1d_jump}

Next, we again consider \eqref{eq:Wave_hd} as in the previous example, yet with a
discontinuous, piecewise constant wave speed,
\[
c\left(x,\omega\right) = \left\{
\begin{aligned}
&1, & &0 \leq x < \xi\left(\omega\right), \\
&2, & &\xi\left(\omega\right) \leq x \leq 6.
\end{aligned}
\right.
\]
Here, the exact jump position is not known precisely and thus modelled as a uniform random variable $\xi \sim U(4-H_0,4)$ centered about $x=4 
-H_0/2
$, with the mesh size on the coarsest level $H_0 = 1/16$.
Inside $[4-H_0,4]$, the mesh is locally refined with $h^{\operatorname*{f}} = 2^{-9}$,
independently of the realization of the random variable.
For spatial discretization, we choose continuous $\mathcal{P}^1$ \FE s.
For the time discretization, we use the leap-frog method (\ref{eq:LF2scheme}), either with or without LTS, as in \cref{AlgStabLTS}. 

Again, we apply the MLMC \cref{alg:MLMCGiles}, either with or without LTS, to estimate  the expected value of the mean solution at a fixed time $T>0$, $\mathbb{E}[u(\cdot,T,\cdot)] \in V = L^2(D)$.
On the left, \cref{fig:samplesol_c2jump} shows an overlay of six particular samples $c(x,\omega^{(i)})$ and the respective FE-solu\-tions $u_{\ell}(x,T,\omega^{(i)})$ of (\ref{eq:Wave_hd}) at $T = 6$ on the coarsest level.
On the right, we compare the performance of MLMC using either LF-LTS or the standard LF method with respect to the root mean square error $\varepsilon$ according to \eqref{eq:MSE_splitform}, where again the total variance and bias term are estimated using \eqref{eq:Vell_est} and \eqref{eq:bias_est} with $\alpha=2$, respectively. 
Although the total computational cost behaves inversely proportional to $\varepsilon^2$ in both cases, the LF-LTS based MLMC method achieves a significant reduction in overall computational cost.

\begin{figure}
\centering
\includegraphics[width=0.49\linewidth]{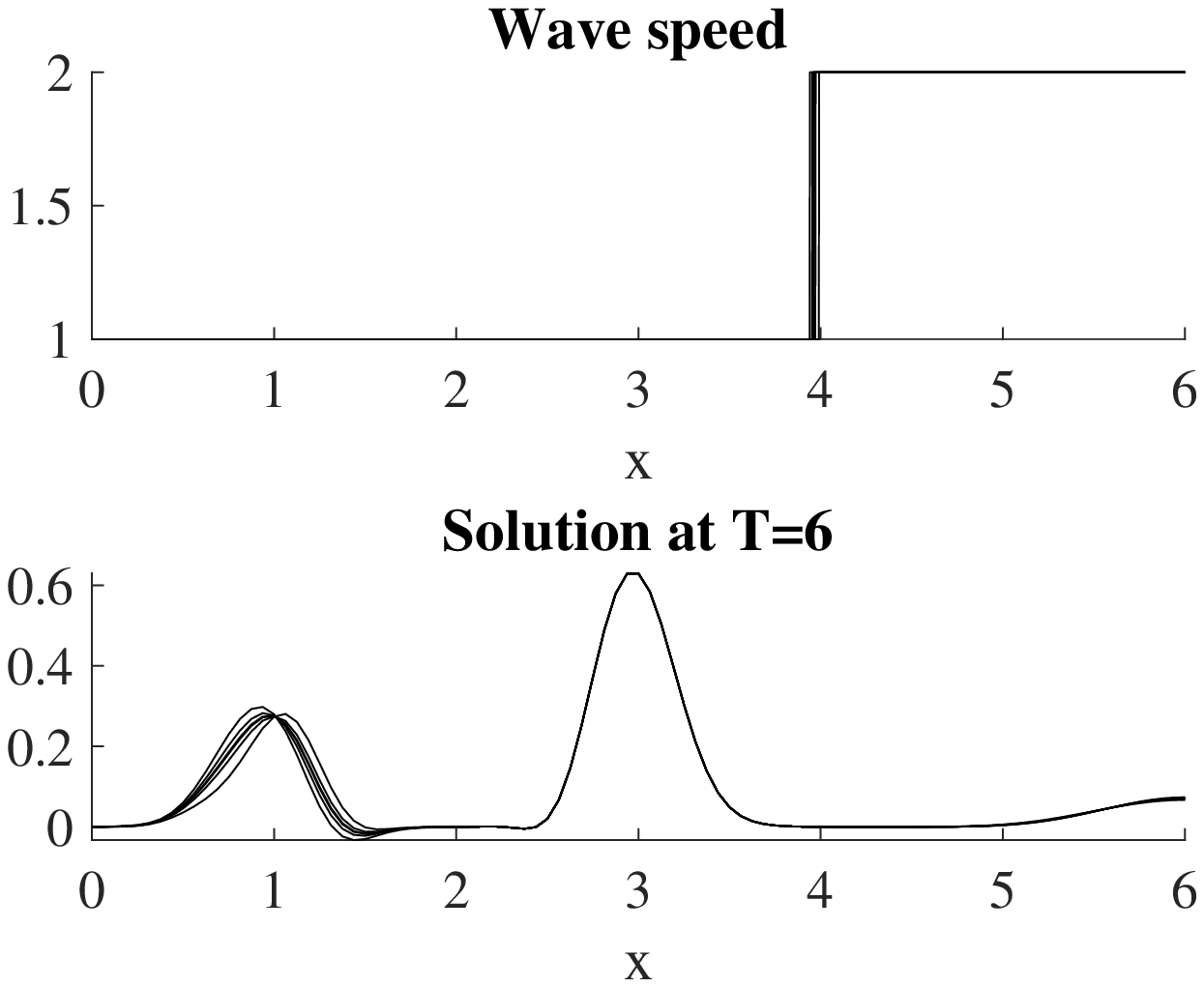}
\includegraphics[width=0.49\linewidth]{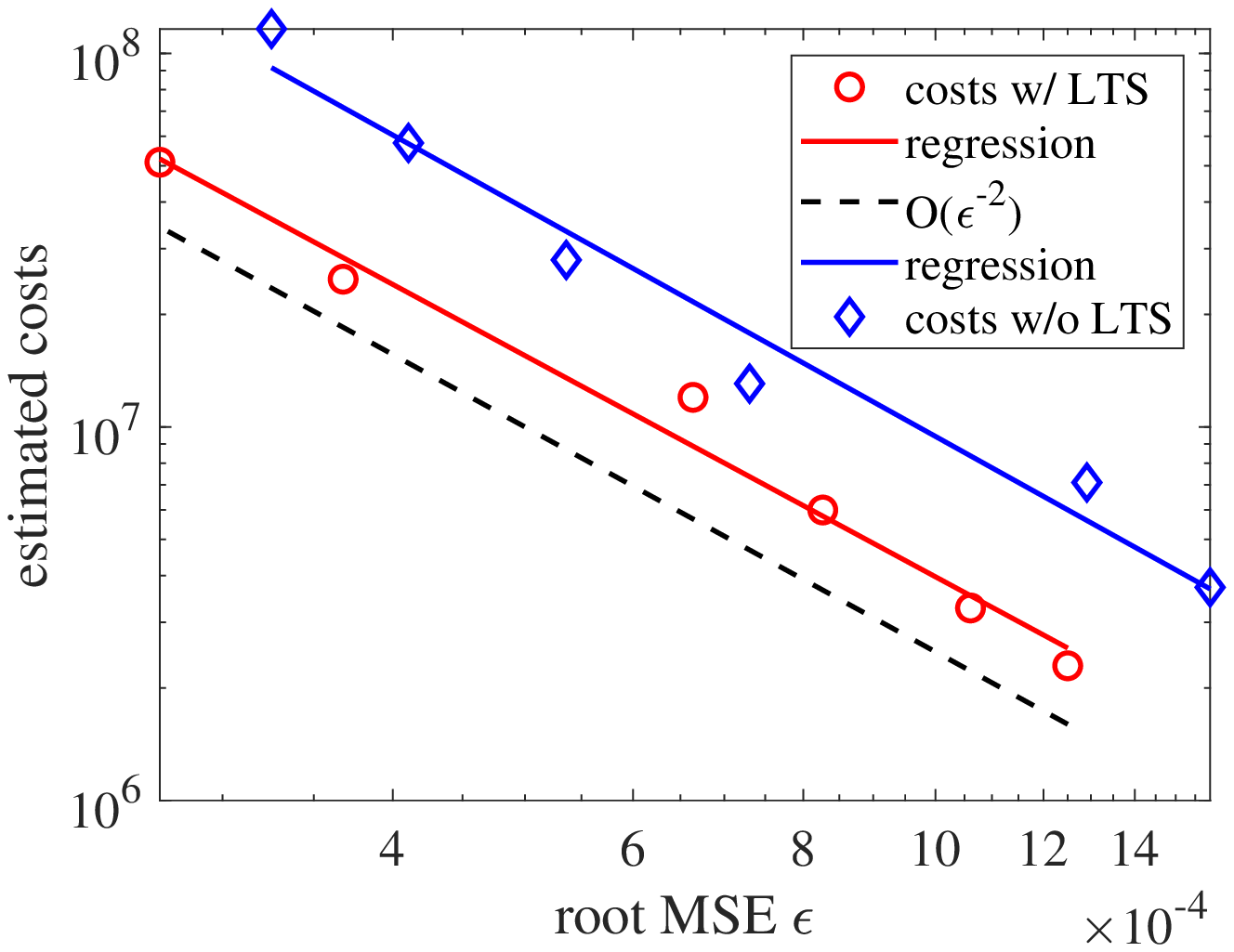}
\caption{Discontinuous random wave speed. Left: Overlay of realizations of the wave speed $c(x,\omega^{(i,0)})$ (top) and FE solutions $u_{0}(x,T,\omega^{(i,0)})$ at $T=6$ (bottom). 
Right: Computational work vs. root MSE tolerance $\varepsilon$ with, or without, LTS.}
\label{fig:samplesol_c2jump}
\end{figure}

\subsection{Two-dimensional narrow channel}
\label{sec:Num_2d}
Finally, we consider wave propagation with constant unit speed $c \equiv 1$, vanishing source $f \equiv 0$ and homogeneous Neumann boundary conditions in the two-dimensional domain $D_{b(\omega)}$, shown in \cref{fig:localmesh_2Dchannel},  with varying random width $b(\omega)$ of the narrow channel connecting the two rectangular regions.
This leads to the weak formulation: 
Find $u: [0,T] \times \Omega \to H^1\left(D_{b(\omega)}\right)$ such that
\begin{equation}
\frac{\partial^2}{\partial t^2} \left(u\left(t,\omega\right),v\right) + \left(\nabla u\left(t,\omega\right), \nabla v \right) = 0 \qquad\forall v \in H^1\left(D_{b(\omega)}\right).
\label{eq:WeakWave2d}
\end{equation}
Here, $D_{b(\omega)}$ consists of two $0.95 \times 1$ rectangles connected by a narrow channel $0.1 \times b(\omega)$ of varying random width $b(\omega)\in [0.001, 0.007]$, where the origin of the coordinate axes is located at the center of the narrow channel.
Next, we reformulate \eqref{eq:WeakWave2d} as in \eqref{eq:Wave_hd} on a (deterministic) reference domain $\bar{D}$ of fixed channel width $\bar{b} = 0.004$, yet with inhomogeneous random velocity $c(x,\omega)$.
To do so, we introduce the geometric transformation
\begin{equation}
\left\{
\begin{array}{rrcl}
\Psi(\omega) : & \bar{D} &\to& D_{b(\omega)}, \\
& (x,y) &\mapsto& \left(x , y + \varphi\left(x\right)\psi \left(y,\omega\right) \right),
\end{array}
\right.
\label{eq:MeshTransform}
\end{equation}
which maps the reference domain $\bar{D}$ to the actual computational domain $D_{b(\omega)}$.
Here, $\varphi$ is smooth and  $\psi$ piecewise linear and continuous:
\[
\varphi(x) = \left\{
\begin{array}{ll}
1, & |x| \leq 0.05,\\
0, & |x| \geq 0.1,
\end{array}
\right.
\qquad
\psi(y,\omega) = \left\{
\begin{array}{ll}
0, & y = 0, \pm 0.5, \\
\pm \left( \frac{b\left(\omega\right)}{2} - \frac{\bar{b}}{2} \right), & y = \pm \frac{\bar{b}}{2}.
\end{array}
\right.
\]
Then, \eqref{eq:WeakWave2d} is equivalent to 
\begin{equation}
\frac{\partial^2}{\partial t^2} \left( \left|J\left(\omega\right)\right| u,v \right) + \left(\left|J\left(\omega\right)\right| J\left(\omega\right)^{-\top} \nabla u , J\left(\omega\right)^{-\top} \nabla v \right) = 0 \qquad\forall v \in H^1\left(\bar{D}\right)
\label{eq:waveeq_transformed}
\end{equation}
with $J\left(\omega\right)$ the Jacobian of $\Psi(\omega)$.
As initial conditions we choose the Gaussian pulse
\begin{align*}
u_0(\mathbf{x}) &= \left\{
\begin{array}{ll}
\exp\left( 1 - \frac{R^2}{R^2 - ||\mathbf{x-x}_0||^2} \right), & ||\mathbf{x-x}_0|| < R, \\
0, &\mbox{else},
\end{array}
\right.
\\
v_0(\mathbf{x}) &= 0
\end{align*}
centered about $\mathbf{x}_0 = (0.5,0)$ and $R = 0.2$.

For the spatial discretization inside $\bar{D}$, we use continuous, piecewise linear, finite elements and generate a sequence of triangular meshes 
independent of the realization $b(\omega)$, with
mesh size $H_\ell = 1 / 60 \cdot 2^{-\ell}$, $\ell =0,1,2,\ldots$, outside the channel.
Inside the channel we use a mesh size $h^{\operatorname*{f}} \approx 7.6 \cdot 10^{-4}$ to resolve the narrow gap geometry.
For simplicity	 
in the implementation,
we simulate the varying channel width by applying the transformation \eqref{eq:MeshTransform} directly to the vertices 
of the mesh
for each realization of the uniform random variable $b \sim U(0.001,0.007)$.
For time discretization, we use the (stabilized) LF-LTS method listed in \cref{AlgStabLTS}.

We now apply the MLMC \cref{alg:MLMCGiles} to estimate as QoI the expected value of the mean solution along the vertical line $x=-0.4$ at time $T=1$, 
\[
\mathbb{E}[Q(u)](y) := \mathbb{E}\left[\left. u\left(\mathbf{x},T,\cdot\right)\right|_{\mathbf{x} = (-0.4,y)} \right] \in V := L^2\left(\left[-0.5,0.5\right]\right).
\]
In \cref{fig:SolutionChannel}, the FE-LF-LTS solution is shown at time $t=1$ for a particular sample $b \approx 0.00589$.
We observe how the wave initiated on the right crosses the channel whose exit acts as a point source in the left rectangle.
\cref{fig:samplesol_2d} shows an overlay of all realizations of the quantity of interest $Q_\ell\left(\omega^{(i,\ell)}\right)$ on levels $\ell = 1,2$ and the MLMC estimate $\widehat{Q}_h^{\ML}$ generated by \cref{alg:MLMCGiles} for a RMS error tolerance $\varepsilon = 5 \cdot 10^{-5}$, which corresponds to a $2 \%$ relative $L^2$ error. 

Next, we compare the cost of computing the same QoI either with or without LTS on each level.
As shown in \cref{tab:2dExampleComparison}, the variances $V_\ell$ with LTS are smaller than with standard LF, which results in a smaller number of samples $N_\ell$ on each level. 
Moreover, the speed-up per sample $C_\ell^{\LF} / C_\ell^{\LTSLF}$ is maximal on the coarsest levels $\ell=0,1$, where the difference between $H_\ell$ and $h^{\operatorname*{f}}$ is greatest.
Here, the optimal values $N_\ell$ are determined by \cref{alg:MLMCGiles} according to \eqref{eq:optimal_Nl} while $V_\ell$ is estimated from \eqref{eq:Vell_est}. 
To estimate $C_\ell$, we use the cost models \eqref{eq:C_l_LF} and \eqref{eq:C_l_LTS} from Sections \ref{sec:localref} and \ref{sec:LTS}, respectively.
For the total cost $\mathcal{C}= \sum_\ell N_\ell C_\ell$, the MLMC method with LTS is about  $6.8$ times faster than MLMC without LTS for the same error tolerance.

\begin{figure}
\centering
\includegraphics[width=0.62\textwidth]{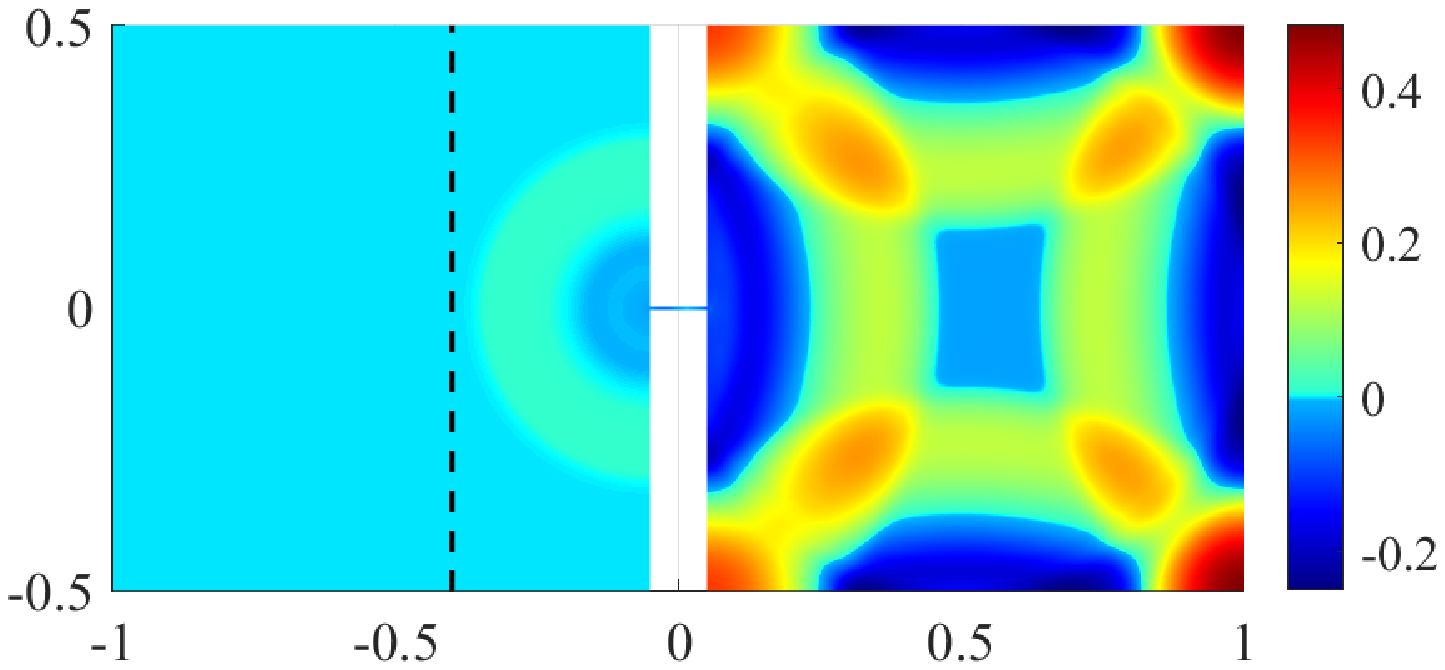}
\includegraphics[width=0.37\textwidth]{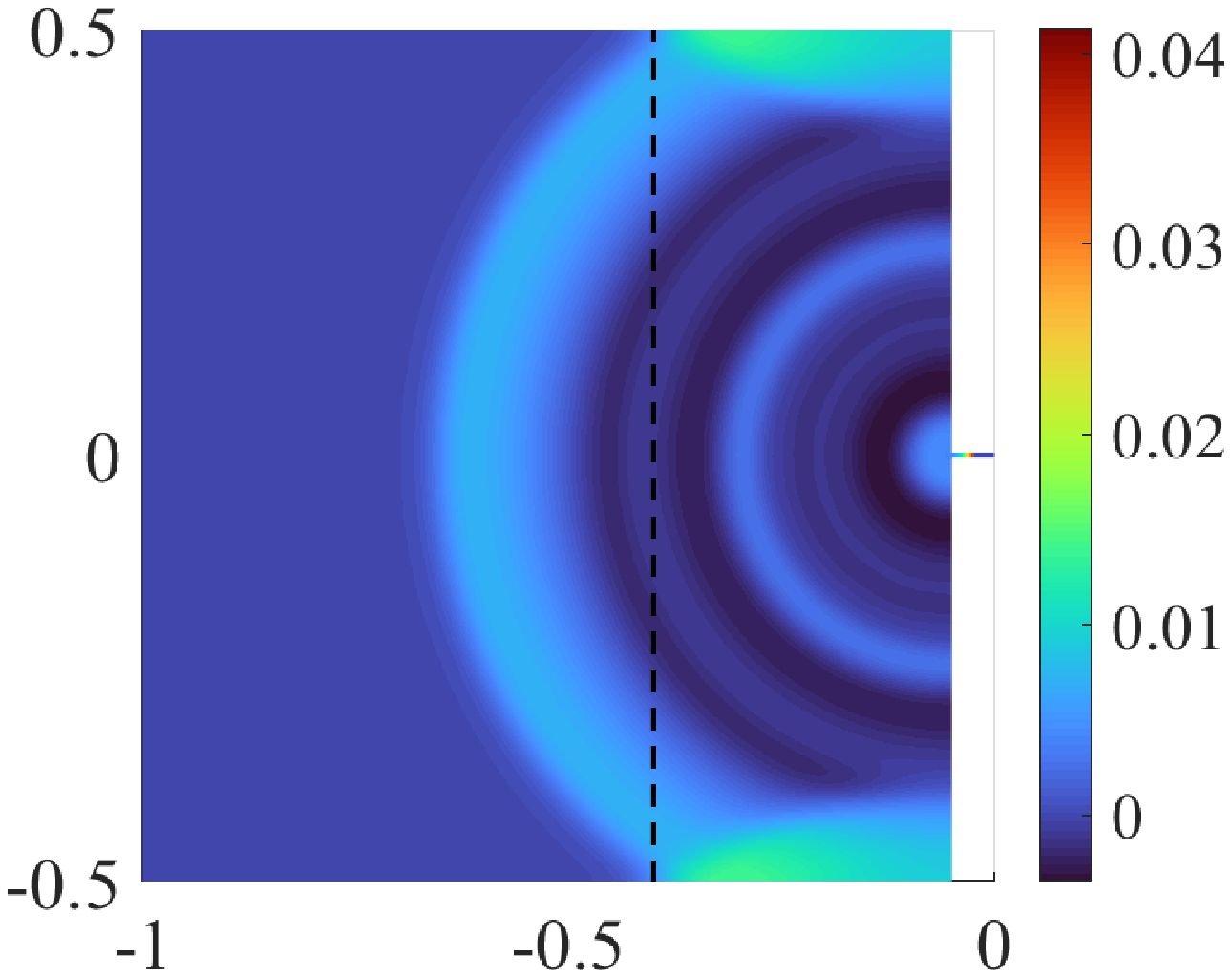}
\caption{Two-dimensional narrow channel. FE solution of the wave equation on $D_{b}$ for $b \approx 0.00589$ at time $t = 1$ with a dashed line at $x=-0.4$. On the right: The same solution restricted to the left rectangle ($x<0$) with an adjusted color map.}
\label{fig:SolutionChannel}
\end{figure}
\begin{figure}
\centering
\includegraphics[width=0.49\textwidth]{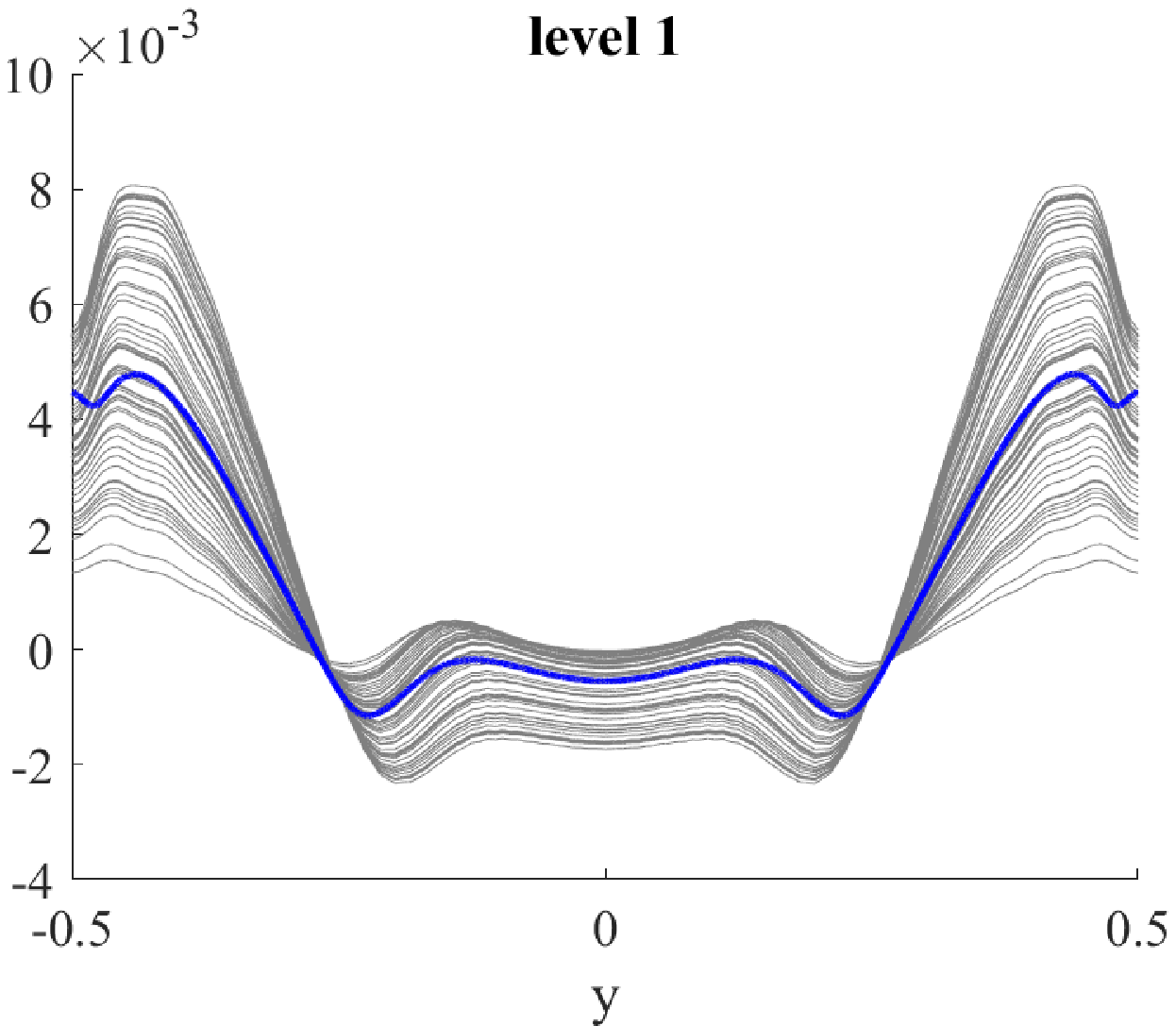}
\includegraphics[width=0.49\textwidth]{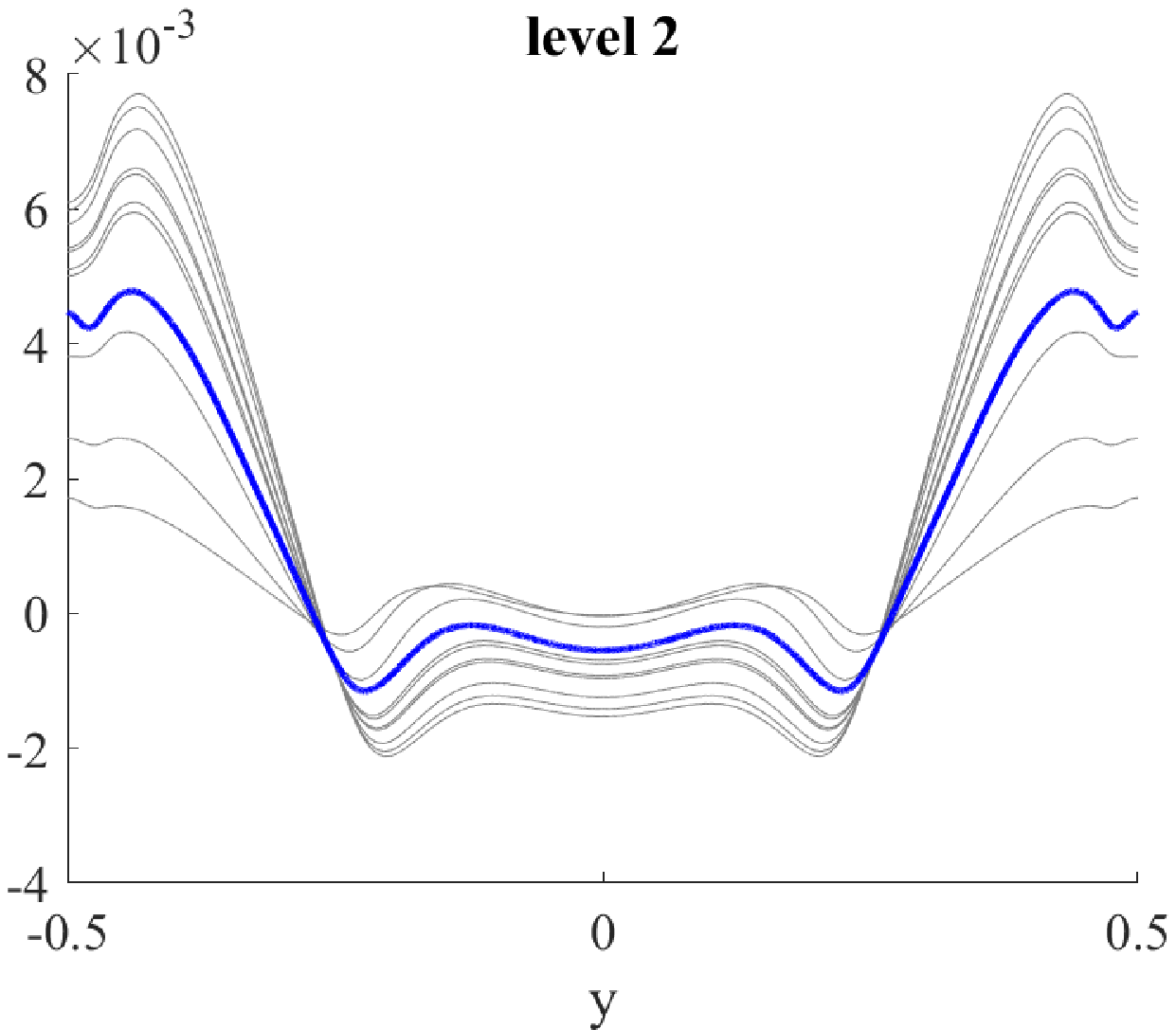}
\caption{Two-dimensional narrow channel. Overlay of solutions $Q_\ell^{(i,\ell)} = u_\ell\left(x,y,T,\omega^{(i,\ell)}\right)$ along the vertical line $x=-0.4$ at fixed time $T=1$ computed by the MLMC \cref{alg:MLMCGiles} with LF-LTS on levels $\ell=1,2$ (grey) together with the final estimate $\widehat{Q}^{\ML}_h$ of $\mathbb{E}[Q]$ (blue).}
\label{fig:samplesol_2d}
\end{figure}

\begin{table}
\caption{Two-dimensional narrow channel. Comparison of using LF-LTS or standard LF to compute MLMC estimate $\widehat{Q}^{\ML}_h$ for RMS error tolerance $\varepsilon = 5\cdot 10^{-5} \approx 0.020 \cdot \|\widehat{Q}^{\ML}_h\|_{L^2(-0.5,0.5)}$.}
\begin{center}
\begin{tabular}{c||c|c|c|c|c}
 & \multicolumn{2}{c|}{variance $V_\ell$} & \multicolumn{2}{c|}{number of samples $N_\ell$} & costs per sample \\
Level & LF-LTS & standard LF & LF-LTS & standard LF & $C_\ell^{\LF} / C_\ell^{\LTSLF}$ \\
\hline\hline
$\ell = 0$ & $1.21 \cdot 10^{-6}$ & $1.22 \cdot 10^{-6}$ & $1266$ & $1455$ & $5.89$ \\
$\ell = 1$ & $6.98 \cdot 10^{-9}$ & $2.04 \cdot 10^{-8}$ & $51$ & $86$ & $7.82$ \\
$\ell = 2$ & $3.21 \cdot 10^{-10}$ & $8.87 \cdot 10^{-10}$ & $10$ & $11$ & $4.54$ \\
$\ell = 3$ & $7.96 \cdot 10^{-11}$ & $2.66 \cdot 10^{-10}$ & $2$ & $3$ & $2.51$
\end{tabular}
\end{center}
\label{tab:2dExampleComparison}
\end{table}

Finally, we compare the cost of the MLMC approach with that using standard MC on any given level. 
For the MLMC method with LTS, \cref{tab:2dExampleDetails} provides not only 
 the number of samples $N_\ell$ and the variance $V_\ell$ with respect to the differences $\Delta Q_\ell$, but also lists the numerical bias on each level, estimated by \eqref{eq:bias_est}, and the variance $V_\ell^{\MC}$ with respect to the quantities $Q_\ell$ itself:
\begin{align}
V_\ell^{\MC} = \mathbb{E}\left[ \left\| Q_\ell - \mathbb{E}\left[Q_\ell\right]\right\|_V^2 \right]
&= \mathbb{E}\left[ \left\| Q_\ell \right\|_V^2 \right] - \left\|\mathbb{E}\left[Q_\ell\right]\right\|_V^2 \nonumber\\
&\approx \frac{1}{N_\ell} \left( \sum_{i=1}^{N_\ell} \left\| Q_\ell^{(i,\ell)} \right\|_V^2 - \frac{1}{N_\ell} \left\| \sum_{i=1}^{N_\ell} Q_\ell^{(i,\ell)} \right\|_V^2 \right).
\label{eq:varQell}
\end{align}
Note that the MLMC \cref{alg:MLMCGiles}, the number of levels is determined adaptively such that the numerical bias is bounded by $\left\|\mathbb{E}\left[Q - Q_\ell\right]\right\|_V^2 < \varepsilon^2/2 = 1.25 \cdot 10^{-9}$.
To bound on level $\ell$ the mean square error for a standard MC estimator, $\widehat{Q}_\ell^{\MC}$,
\[
e\left(\widehat{Q}_\ell^{\MC}\right) = \left(N^{\MC}_\ell\right)^{-1} V^{\MC}_\ell + \left\|\mathbb{E}\left[Q - Q_\ell\right]\right\|_V^2,
\]
by the same error tolerance $\varepsilon^2 = 2.5 \cdot 10^{-9}$ would require $N_\ell^{\MC} = 5221$ or $N_\ell^{\MC} = 9779$ samples on levels $\ell = 3$ or $\ell = 2$, respectively, resulting in a 
$350$ or $80$-fold increase in computational cost over MLMC. On level $\ell=1$, that error tolerance cannot even be reached, as the bias is already larger than $\varepsilon^2$. In summary, even for numerical wave propagation where the coarsest level always ought to resolve the dominating wave-length, the multilevel strategy clearly 
outperforms by far a standard (single-level) MC method on any given grid.

\begin{table}
\caption{
Two-dimensional narrow channel. The variance and bias for LF-LTS applied to MLMC estimate $\widehat{Q}^{\ML}_h$ with RMS error tolerance $\varepsilon = 5\cdot 10^{-5} \approx 0.020 \cdot \|\widehat{Q}^{\ML}_h\|_{L^2(-0.5,0.5)}$.
}
\begin{center}
\begin{tabular}{c||c|c|c|c}
 & number of samples & \multicolumn{2}{c|}{variance} & numerical bias \\
Level & $N_\ell$ & $V_\ell$ (cf. \eqref{eq:Var_Def}) & $V_\ell^{\MC}$ (cf. \eqref{eq:varQell}) &  $\left\|\mathbb{E}\left[Q - Q_\ell\right]\right\|_V^2$ \\
\hline\hline
$\ell = 0$ & $1266$ & $1.21 \cdot 10^{-6}$ & $1.20 \cdot 10^{-6}$ & \\
$\ell = 1$ & $51$ & $6.98 \cdot 10^{-9}$ & $7.24 \cdot 10^{-6}$ & $6.45 \cdot 10^{-9}$ \\
$\ell = 2$ & $10$ & $3.21 \cdot 10^{-10}$ & $1.01 \cdot 10^{-5}$ & $1.47 \cdot 10^{-9}$ \\
$\ell = 3$ & $2$ & $7.96 \cdot 10^{-11}$ & $1.25 \cdot 10^{-5}$ & $9.82 \cdot 10^{-11}$
\end{tabular}
\end{center}
\label{tab:2dExampleDetails}
\end{table}

\section{Concluding remarks}
To overcome the increasingly
stringent bottleneck in MLMC on coarser levels due to locally refined meshes when using explicit time
integration, we have introduced on each level local time-stepping (LTS),
 which adapts the time-step to the local CFL stability constraint.
The combined LTS-MLMC algorithm thus extends the well-known robust and efficient classical MLMC algorithm for uncertainty quantification to wave propagation in complex geometry without
sacrificing explicitness or inherent parallelism. 

In our cost comparison of the MLMC algorithm using  either standard or local time-stepping, 
we distinguish between two typical situations where mesh coarsening cannot occur uniformly across all levels. First, when local refinement
occurs inside a small fixed region of the computational domain, we have proved in \cref{prop:Qeff}
that the asymptotic complexity of the computational effort as $\varepsilon^{-2}$ remains unchanged up to a constant factor as the desired accuracy $\varepsilon\rightarrow 0$. Depending on parameter values, 
however, the combined LTS-MLMC easily achieves a significant (but constant) speed-up both in theory and in
our numerical examples, in fact even more so in higher dimensions. In our one-dimensional computations, for instance, we observe a 30-fold speed-up over MLMC with standard time-stepping.
Second, we have considered graded meshes towards a reentrant corner, which restore the optimal convergence rates of FEM in the presence of singularity. 
In particular, for $L$-shaped domains, we have proved in \cref{cor:totalcostLFgraded} that LTS can even improve the overall asymptotic complexity of MLMC as $\varepsilon\rightarrow 0$. In other words, the smaller the desired error $\varepsilon$, the larger the speed-up of the LTS-MLMC algorithm over standard time integration -- see also \cref{rem:Vell}. 

In our analysis and numerical experiments, we have concentrated on standard continuous piecewise
polynomial finite elements (with mass lumping) for the spatial discretization and 
on the popular leapfrog method for time discreitzation. Our cost estimates nonetheless hold for
other spatial discretizations such as finite difference or discontinuous Galerkin methods, too. Thus, we expect
a similar speed-up when replacing other explicit time integrators, such as Adams-Bashforth or Runge-Kutta methods, by their explicit LTS counterparts \cite{GroteMitkova2010,GroteMitkova13,GroteMehlinMitkova15}. 

Although we have only considered pre-defined sequences of meshes featuring local refinement, the methodology and analysis presented here is also relevant for MLMC combined with adaptive mesh refinement based on a posteriori error estimators \cite{Hoel-vonSchwerin-Szepessy-Tempone 2014, Eigel-Merdon-Neumann 2016, M.B. Giles-Lester-Whittle 2014}. The combined LTS-MLMC approach will also prove
useful for parabolic problems, if the LTS counterpart of
explicit RK-Chebyshev methods \cite{AbdulleGroteSouza21} is used for time integration.

\bibliographystyle{abbrv}

\begin{thebibliography}{99}

\bibitem{AbdulleGroteSouza21}
A. Abdulle, M. J. Grote, and G. Rosilho de Souza, \emph{Explicit stabilized multirate method for stiff differential equations},	arXiv:2006.00744 [math.NA], preprint.

\bibitem{Abdulle-Barth-Schwab}
A. Abdulle, A. Barth, and Ch. Schwab, \emph{Multilevel Monte Carlo methods for stochastic elliptic multiscale PDEs}, SIAM Multiscale Modelling \& Simulation \textbf{11} (2013), pp. 1033--1070.

\bibitem{BabKelPit79}
I. Babu{\v {s}}ka, R.~G. Kellogg, and J. Pitk\"{a}ranta, \emph{Direct and inverse error estimates for finite elements with mesh refinements}, Num. Math. \textbf{33} (1979), pp. 447--471.

\bibitem{Ballesio-Beck-Pandey-Parisi-vonSchwerin-Tempone}
M. Ballesio, J. Beck, A. Pandey, L. Parisi, E. von Schwerin, and R. Tempone, \emph{Multilevel Monte Carlo acceleration of seismic wave propagation under uncertainty}, Internat. J. Geomathematics \textbf{10} (2019), pp. 1--43.

\bibitem{Bart-Lang-Schwab}
A. Barth, A. Lang, and Ch. Schwab, \emph{Multilevel Monte Carlo method for parabolic stochastic partial differential equations}, BIT \textbf{53} (2013), pp. 3--27.

\bibitem{BarthSchwabZollinger2011}
A. Barth, Ch. Schwab, and N. Zollinger, \emph{Multi-level Monte Carlo Finite Element method for elliptic PDEs with stochastic coefficients}, Numer. Math. \textbf{119} (2011), pp. 123--161.

\bibitem{Bierig-Chernov}
C. Bierig and A. Chernov, \emph{Convergence analysis of multilevel Monte Carlo variance estimators and application for random obstacle problems}, Numer. Math. \textbf{130} (2015), pp. 579--613.

\bibitem{CarleHochbruckPC}
C. Carle and M. Hochbruck, private communication.

\bibitem{CliffeGiles11}
K. A. Cliffe, M. B. Giles, R. Scheichl, and A. L. Teckentrup, \emph{Multilevel Monte Carlo methods and applications to elliptic PDEs with random coefficients}, Comput. Visualization Sci. \textbf{14} (2011), pp. 3--15.

\bibitem{mass_lumping_2d}
G.~Cohen, P.~Joly, J.~E. Roberts, and N.~Tordjman,\emph{ Higher order
  triangular finite elements with mass lumping for the wave equation}, SIAM J.
  Numer. Anal. \textbf{38} (2001), no.~6, 2047--2078.

\bibitem{COHEN}
G.~Cohen, \emph{Higher {O}rder {N}umerical {M}ethods for {T}ransient {W}ave
  {E}quations}, Springer Verlag, 2002.

\bibitem{DiazGrote09}
J. Diaz and M. J. Grote, \emph{Energy conserving explicit local time-stepping for second-order wave equations}, SIAM J. Sci. Comp. \textbf{31} (2009), pp. 1985--2014.

\bibitem{DiazGrote15}
\bysame, \emph{Multilevel explicit local time-stepping for second-order wave equations}, Comp. Meth. Appl. Mech. Engin. \textbf{291} (2015), 240--265.

\bibitem{Eigel-Merdon-Neumann 2016}
M. Eigel, Ch. Merdon, and J. Neumann, \emph{An adaptive multilevel Monte Carlo method with stochastic bounds for quantities of interest with uncertain data}, SIAM/ASA Journal on Uncertainty Quantification \textbf{4} (2016), pp. 1219--1245.

\bibitem{Fishman}
G.~S. Fishman, \emph{Monte Carlo: Concepts, Algorithms, and Applications}, Springer-Verlag, New York, 1996.

\bibitem{Giles08}
M. B. Giles, \emph{Multilevel Monte Carlo path simulation}, Operations Research \textbf{56} (2008), pp. 607--617.


\bibitem{M.B. Giles-Lester-Whittle 2014}
M.~B. Giles, C. Lester, and J. Whittle, \emph{Non-nested adaptive timesteps in multilevel Monte Carlo computations}, Monte Carlo and Quasi-Monte Carlo Methods, Springer, 2015, pp. 303--314.

\bibitem{GroteMehlinMitkova15}
M. J. Grote, M. Mehlin, and T. Mitkova, \emph{Runge-Kutta based explicit local time-stepping methods for wave propagation}, SIAM J. Sci. Comp. {\bf 37} (2015), no.~2, pp. A747--A775.

\bibitem{GroteMehlinSauter18}
M.~J. Grote, M. Mehlin, and S.~A. Sauter, \emph{Convergence
  analysis of energy conserving explicit local time-stepping methods for the
  wave equation}, SIAM J. Numer. Anal. \textbf{56} (2018), no.~2, 994--1021.
 
\bibitem{GroteMSauter20}
M. J. Grote, S. Michel, and S. A. Sauter, \emph{Stabilized leapfrog based local time-stepping method for the wave equation}, Math. Comp. \textbf{90} (2021), pp. 2603--2643.

\bibitem{GroteMitkova2010}
M. J. Grote and T. Mitkova, \emph{Explicit local time-stepping methods for Maxwell's equations}, J. Comput. Appl. Math. \textbf{234} (2010), pp. 3283--3302.

\bibitem{GroteMitkova13}
\bysame, \emph{Explicit local time-stepping methods for time-dependent wave propagation},  Direct and Inverse Problems in Wave Propagation and Applications, De Gruyter, Berlin, 2013, pp. 187--218.

\bibitem{GSS06}
M. J. Grote, A. Schneebeli, and D. Sch\"otzau, \emph{Discontinuous Galerkin finite element methods for the wave equation}, SIAM J. Numer. Anal. \textbf{44} (2006), pp. 2408--2431.

\bibitem{HLW}
E.~Hairer, C.~Lubich, and G.~Wanner, \emph{Geometric {N}umerical {I}ntegration},
  Springer-Verlag, Berlin, 2002.

\bibitem{NobileTempone2015}
A. L. Haji-Ali, F. Nobile, E. von Schwerin, and R. Tempone, \emph{Optimization of mesh hierarchies in multilevel Monte Carlo samplers}, Stoch PDE: Anal Comp \textbf{4} (2016), pp. 76-–112. 

\bibitem{Heinrich1998}
  S.~Heinrich, \emph{Monte {C}arlo complexity of global solution of integral equations}, Journal of Complexity, \textbf{14} (1998), pp. 151--175.

\bibitem{Heinrich1999}
  S.~Heinrich, and E.~Sindambiwe, \emph{Monte {C}arlo complexity of parametric integration}, Journal of Complexity, \textbf{15} (1999), pp. 317--341.
  
\bibitem{Hoel-vonSchwerin-Szepessy-Tempone 2014}
H. Hoel, E. von Schwerin, A. Szepessy, and R. Tempone, \emph{Implementation and analysis of an adaptive multilevel Monte Carlo algorithm}, Monte Carlo methods and applications, \textbf{20} (2014), pp. 1--41.

\bibitem{HV03}
W. Hundsdorfer and J. Verwer, \emph{Numerical {S}olution of {T}ime-{D}ependent
  {A}dvection-{D}iffusion-{R}eaction {E}quations}, Springer Series in Computational
  Mathematics, vol.~33, Springer-Verlag, Berlin, 2003.

\bibitem{LionsMagenes}
J. Lions and E. Magenes, \emph{Non-homogeneous {B}oundary {V}alue {P}roblems and {A}pplications}, Vol. I, Springer-Verlag, New York, 1972.

\bibitem{MZKM13}
S.~Minisini, E.~Zhebel, A.~Kononov, and W.~A. Mulder, \emph{Local time stepping
  with the discontinuous {G}alerkin method for wave propagation in 3{D}
  heterogeneous media}, Geophysics \textbf{78} (2013), T67--T77.
  
\bibitem{Mishra-Schwab MCOM 2012}
S. Mishra and Ch. Schwab, \emph{Sparse tensor multi-level Monte Carlo finite volume methods for hyperbolic conservation laws with random initial data}, Math. Comp. \textbf{81} (2012), pp. 1979--2018.  
  
\bibitem{Mishra-Schwab-Sukys-1}
S. Mishra, Ch. Schwab, and J. Sukys, \emph{Multi-level Monte Carlo finite volume methods for nonlinear systems of conservation laws in multi-dimensions}, J.  Comput. Phys. \textbf{231} (2012), pp. 3365-3388.

\bibitem{Mishra-Schwab-Sukys-2}
\bysame, \emph{Multi-level Monte Carlo finite volume methods for uncertainty quantification of acoustic wave propagation in random heterogeneous layered medium}, J. Comput. Phys. \textbf{312} (2016), pp. 192-217.

\bibitem{Appelo}
M. Motamed and D. Appel\"o, \emph{A multi-order discontinuous {G}alerkin {M}onte {C}arlo method for hyperbolic problems with stoch\-astic parameters}, SIAM J. Numer. Anal. \textbf{56} (2018), pp. 448--468.

\bibitem{motamed.nobile.tempone:stochastic}
M.~Motamed, F.~Nobile, and R.~Tempone, \emph{A stochastic collocation method for  the second order wave equation with a discontinuous random speed}, Numer. Math, \textbf{123} (2013), pp.~493--536.

\bibitem{motamed.nobile.tempone:analysis}
\bysame, \emph{Analysis and computation of the
  elastic wave equation with random coefficients}, Comput. \& Math. with
  Appl., \textbf{70} (2015), pp.~2454--2473.

\bibitem{MuellerSchwab15}
F.~M\"uller and Ch. Schwab, \emph{Finite elements with mesh refinement for wave equations in polygons}, J. Comput. Appl. Math. \textbf{283} (2015), pp. 163--181.

\bibitem{MuellerSchwab16}
\bysame, \emph{Finite elements with mesh refinement for elastic wave propagation in polygons}, Math. Meth. Appl. Sci. \textbf{39} (2016), pp. 5027--5042.

\bibitem{Pisaroni-Nobile-Leyland CMAME}
M. Pisaroni, F. Nobile, and P. Leyland, \emph{A Continuation Multi Level Monte Carlo (C-MLMC) method for uncertainty quantification in compressible inviscid aerodynamics}, Comput. Methods Appl. Mech. Engrg. \textbf{326} (2017), pp. 20--50.

\bibitem{Rietmann2017}
M.~Rietmann, M.~J. Grote, D.~Peter, and O.~Schenk, \emph{Newmark local time
  stepping on high-performance computing architectures}, J.~Comput.~Phys.
  \textbf{334} (2017), 308--326.

\bibitem{Rivlin}
T.~J. Rivlin, \emph{The {C}hebyshev {P}olynomials}, Wiley, New York, 1974.

\bibitem{Schwab}
Ch. Schwab, \emph{$p-$ and $hp-$ Finite Element Methods}, Oxford Univ. Press, New York, 1998.

\bibitem{ExplicitHDG16}
M.~Stanglmeier, N.~C. Nguyen, J. Peraire, and B. Cockburn, \emph{An explicit hybridizable discontinuous {G}alerkin method for the acoustic wave equation}, Comput. Methods Appl. Mech. Engrg. \textbf{300} (2016), pp. 748--769.

\bibitem{TSGU13}
A.~L. Teckentrup, R. Scheichl, M.~B. Giles, and E. Ullmann, \emph{Further analysis of multilevel {M}onte {C}arlo methods for elliptic {PDE}s with random coefficients}, Num. Math., \textbf{125} (2013), pp. 569--600.

\bibitem{TryoenJCP2010}
J. Tryoen, O. Le Ma\^{i}tre, M. Ndjinga, and A. Ern, \emph{Intrusive Galerkin methods with upwinding for uncertain nonlinear hyperbolic systems}, J. Comput. Phys. \textbf{229} (2010), pp. 6485--6511.

\end{thebibliography}

\end{document}